\numberwithin{equation}{section}
\newtheorem{theorem}{Theorem}[section]
\newtheorem{lemma}[theorem]{Lemma}
\newtheorem{corollary}[theorem]{Corollary}
\theoremstyle{definition}\newtheorem{definition}[theorem]{Definition}
\theoremstyle{theorem}\newtheorem{proposition}[theorem]{Proposition}
\newtheorem{problem}[theorem]{Problem}
\theoremstyle{definition}
\theoremstyle{definition}
\theoremstyle{definition}\newtheorem{remark}[theorem]{Remark}
\theoremstyle{definition}\newtheorem*{acknowledgments}{Acknowledgments}
\newcommand{\al}{\alpha}
\newcommand{\ga}{\gamma}
\newcommand{\Ga}{\Gamma}
\newcommand{\del}{\delta}
\newcommand{\lam}{\lambda}
\newcommand{\Lam}{\Lambda}
\newcommand{\eps}{\epsilon}
\newcommand{\Sig}{\Sigma}
\newcommand{\vphi}{\varphi}
\newcommand{\cA}{\mathcal{A}}
\newcommand{\cG}{\mathcal{G}}
\newcommand{\cS}{\mathcal{S}}
\newcommand{\bR}{\mathbb{R}}
\newcommand{\bZ}{\mathbb{Z}}
\newcommand{\bT}{\mathbb{T}}
\newcommand{\SL}{\operatorname{SL}}
\newcommand{\GL}{\operatorname{GL}}
\newcommand{\defi}{\overset{\on{def}}{=}}
\newcommand{\comp}{\textrm{{\tiny $\circ$}}}
\newcommand\norm[1]{||#1||}
\newcommand\set[1]{\left\{#1\right\}}
\newcommand\pa[1]{\left(#1\right)}
\newcommand\idist[1]{\langle#1\rangle}
\newcommand\av[1]{|#1|}
\newcommand\on[1]{\operatorname{#1}}
\newcommand\mb[1]{\mathbf{#1}}
\newcommand\smallmat[1]{\pa{\begin{smallmatrix}#1\end{smallmatrix}}}
\newcommand{\Mat}{\operatorname{Mat}}
\newcommand{\wstar}{\overset{\on{w}^*}{\lra}}
\newcommand{\supp}{\on{supp}}
\newcommand\diam[1]{\on{diam}{(#1)}}
\newcommand\ol[1]{\overline{#1}}
\newcommand{\Hom}{\on{Hom}_{\on{epi}}(\bZ^m,\bZ^n/\Sig)}
\newcommand{\gen}{\cS_{\bZ^n/\Sig}(m)}
\newcommand{\lra}{\longrightarrow}
\newcommand{\onto}{\xymatrix{\ar@{>>}[r]&}}
\newcommand{\eqlabel}[2]
{
\begin{equation}
{#2}\label{#1}
\end{equation}
}
\begin{document}
\title[Random Cayley graphs]{Asymptotic metric behavior of random
Cayley graphs of finite abelian groups}
\author[Uri Shapira]{Uri Shapira}
\author[Reut Zuck]{Reut Zuck}

\begin{abstract}
Using methods of Marklof and Str\"ombergsson we 
establish several limit laws for metric parameters of random Cayley graphs of 
finite abelian groups with respect to a randomly chosen set of generators of a fixed size. Doing so we settle a 
conjecture of Amir and Gurel-Gurevich.
\end{abstract}
\address{Department of Mathematics\\
Technion \\
Haifa \\
Israel }
\email{ushapira@tx.technion.ac.il}
\email{reut@tx.technion.ac.il}
%
%
\maketitle

\section{\label{sec:Introduction}Introduction}

\subsection{The main result}
For a finite group $\Ga$ and a generating set $s\subset \Ga$ we denote by $C_\Ga(s)$ the corresponding Cayley graph and 
by $C_\Ga^+(s)$ the corresponding directed Cayley graph (digraph for short). 
Let $m\ge n$ be integers with $m\ge 2$ and let $\Sig<\bZ^n$ be a finite index subgroup. Consider the set
$$\cS_{\bZ^n/\Sig}(m)\defi\set{s:s\subset\bZ^n/\Sig, \av{s}=m, \idist{s}=\bZ^n/\Sig}.$$
Our aim is to establish several limit laws for (proper scalings of) some metric parameters on random Cayley graphs or digraphs
which are obtained by choosing $s\in\cS_{\bZ^n/\Sig}(m)$ at random, as $\Sig\to\infty$ in the following sense. 
\begin{definition}\label{def:divergence} 
If $m>n$ we say that $\Sigma\to\infty$
if $\av{\Sig}\to\infty$, where we denote by $\av{L}$ the covolume of a lattice $L<\bR^n$. 
If $n=m$, we say that $\Sig\to\infty$ if $\av{\Sig}/(\gcd\Sig)^n\to\infty$, where $\gcd \Sig$ denotes the largest positive integer $\ell$ for which $\ell^{-1}\Sig\subset \bZ^n$ (in case $m=n$, we basically want to exclude the sequence $\Sig=k\bZ^n$, where $\av{\Sig}=(\gcd\Sig)^n$).
\end{definition} 
Our results will refer to a metric parameter $\xi(\cG)$ of a graph or digraph according to the following list. We 
will refer to the choice $\cG = C_{\bZ^n/\Sig}(s)$ as the \textit{undirected case} and to $\cG =C_{\bZ^n/\Sig}^+(s)$ as the \textit{directed case}:
\begin{enumerate}[(I)]
\item\label{cdiam} $\xi(\cG)=\on{diam}(\cG)$ (both directed and undirected cases),
\item\label{cavgdist} $\xi(\cG)^\al=\frac{1}{\av{\on{V}_\cG}^2}\sum_{x,y\in \on{V}_\cG}\on{d}_\cG^\al(x,y)$, for $\al>0$  (both directed and undirected cases),
\item\label{cgirth} $\xi(\cG) = \on{girth}(\cG)$ (only in the directed case).
\end{enumerate}
Above $\on{V}_\cG$ denotes the set of vertices of $\cG$ and $\on{d}_\cG$ denotes the graph metric in the undirected case and non-symmetric metric (ns-metric for short), in the directed case. 
\begin{theorem}\label{main theorem 1}
Let $\xi$ be as in~\eqref{cdiam}, \eqref{cavgdist}, or \eqref{cgirth}. Then, as $\Sig\to\infty$ in the sense of Definition~\ref{def:divergence}, 
the random variable $s\mapsto \av{\Sig}^{-\frac{1}{m}} \xi(\cG)$, defined 
on $\cS_{\bZ^n/\Sig}(m)$ (where $\cG = C_{\bZ^n/\Sig}(s)$ or $\cG = C_{\bZ^n/\Sig}^+(s)$ according 
to the case, and $s$ is chosen uniformly at random), converges in distribution 
and the limit distribution is given by an explicit random variable (which depends on $\xi$) 
on the space of unimodular lattices
in $\bR^m$ equipped with the natural $\SL_m(\bR)$-invariant probability measure.
\end{theorem}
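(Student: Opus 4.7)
The plan is to reduce everything to an equidistribution statement for a naturally associated family of lattices in $\bR^m$. Given $s=(s_1,\dots,s_m)\in\cS_{\bZ^n/\Sig}(m)$, define the surjective homomorphism $\pi_s:\bZ^m\to\bZ^n/\Sig$ by $e_i\mapsto s_i$, and set $\Lam_s\defi\ker\pi_s$, a sublattice of $\bZ^m$ of index $\av{\Sig}$ and hence of covolume $\av{\Sig}$ in $\bR^m$. The normalized lattice
\eq{L_s\defi \av{\Sig}^{-1/m}\Lam_s}
is then a unimodular lattice in $\bR^m$, i.e.\ an element of $X_m\defi\SL_m(\bR)/\SL_m(\bZ)$.

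\textbf{Translating the metric invariants into lattice functionals.} Under the identification $\bZ^n/\Sig\cong\bZ^m/\Lam_s$ via $\pi_s$, the graph metric of $C_{\bZ^n/\Sig}(s)$ becomes $d(\bar a,\bar b)=\min\set{\norm{v}_1:v\in\bZ^m,\,v\equiv a-b\ (\on{mod}\,\Lam_s)}$, and the ns-metric of $C^+_{\bZ^n/\Sig}(s)$ is obtained by restricting to $v\in\bZ^m_{\ge 0}$ (equivalently, replacing $\norm{\cdot}_1$ by the linear functional $\ell(v)=\sum v_i$ restricted to the positive cone). Consequently:
\begin{enumerate}[(i)]
\item $\on{diam}(C_{\bZ^n/\Sig}(s))$ equals the covering radius of $\Lam_s$ with respect to the $\ell^1$ unit ball $B_1\subset\bR^m$, and in the directed case the covering radius with respect to the simplex $\Delta=\set{x\in\bR^m_{\ge 0}:\ell(x)\le 1}$ (centred appropriately);
\item the $\al$-average distance is $\frac{1}{\on{vol}(F)}\int_F \on{dist}(x,\Lam_s)^\al\,dx$ for a fundamental domain $F$ of $\Lam_s$, with $\on{dist}$ measured in $\norm{\cdot}_1$ or $\ell$;
\item $\on{girth}(C^+_{\bZ^n/\Sig}(s))=\min\set{\ell(v):v\in\Lam_s\cap\bR^m_{\ge 0}\setminus\set{0}}$.
\end{enumerate}
Each of these is a $\on{GL}_m(\bZ)$-invariant positive function $F_\xi$ on the space of lattices in $\bR^m$, homogeneous of degree $1$ (resp.\ $1$ for $\xi=\on{diam}$ or $\on{girth}$, and degree $1$ for the $\al$-normalized average). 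Hence $\av{\Sig}^{-1/m}\xi(\cG)=F_\xi(L_s)$, a continuous (in fact proper, modulo a measure-zero set of degenerate lattices) function on $X_m$.

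\textbf{The key input.} The heart of the argument is the equidistribution statement: as $\Sig\to\infty$ in the sense of Definition~\ref{def:divergence} and $s$ is drawn uniformly from $\cS_{\bZ^n/\Sig}(m)$, the random lattice $L_s$ converges in distribution on $X_m$ to the Haar-random unimodular lattice. I would establish this in the Marklof--Str\"ombergsson style: parametrize lattices $\Lam_s$ by the cosets $g_s\SL_m(\bZ)$ for an appropriate $g_s\in\SL_m(\bR)$, show that the counting measures $\frac{1}{\av{\cS_{\bZ^n/\Sig}(m)}}\sum_s\del_{L_s}$ pull back to a natural sum over the primitive surjective homomorphisms $\bZ^m\to\bZ^n/\Sig$, and invoke Ratner/Mozes--Shah or the direct unipotent-averaging equidistribution theorems of Marklof--Str\"ombergsson for expanding translates of the horospherical subgroup fixing $e_1,\dots,e_n$ (the distinction between $m>n$ and $m=n$ in Definition~\ref{def:divergence} reflects exactly the divergence condition needed to rule out escape of mass/periodic orbits). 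Once this is proved, the theorem follows by the continuous mapping theorem applied to $F_\xi$.

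\textbf{Main obstacle.} The delicate point is twofold: proving the equidistribution of $L_s$, and controlling the functionals $F_\xi$ near the cusp of $X_m$. The equidistribution requires identifying the correct $\SL_m(\bR)$-orbit structure carrying the counting measures on $\cS_{\bZ^n/\Sig}(m)$ and verifying non-escape of mass, the latter being precisely where the normalization $\av{\Sig}/(\gcd\Sig)^n$ in the case $m=n$ is dictated. For the continuous mapping step one must verify that $F_\xi$ is a.e.\ continuous on $X_m$ and that the sequence $F_\xi(L_s)$ is tight: covering radius and the directed girth blow up precisely when $L_s$ approaches a sublattice of a coordinate hyperplane or the positive cone loses content, so tightness reduces to Minkowski/Siegel-type estimates together with the no-escape-of-mass provided by the equidistribution. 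For the $\al$-average distance one also needs uniform integrability, which I would obtain from Rogers' moment bounds for $\int_{X_m}\on{dist}(\,\cdot\,,L)^\al$-type functionals.
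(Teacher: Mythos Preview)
Your overall architecture matches the paper: associate to each $s$ the kernel lattice $\Lam_s<\bZ^m$, identify the Cayley (di)graph with the approximate torus (di)graph $C_{\bZ^m/\Lam_s}(I)$, express the scaled parameter as (approximately) a continuous lattice functional $\zeta$ evaluated at $\bar\Lam_s\in X_m$, prove equidistribution of $\{\bar\Lam_s\}$ in $X_m$, and push forward. The differences are in the two places you flag as the ``main obstacle''.

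\textbf{Equidistribution.} Here your route genuinely diverges from the paper's. You propose to obtain the equidistribution of $\{\bar\Lam_s\}$ via unipotent dynamics in the Marklof--Str\"ombergsson style (expanding horospherical translates, Ratner/Mozes--Shah). The paper instead makes the algebraic observation that the set $\cA_\Sig=\{\Lam<\bZ^m:\bZ^m/\Lam\simeq\bZ^n/\Sig\}$ is a \emph{single} $\GL_m(\bZ)$-orbit, and that the map $s\mapsto\Lam_s$ has fibers of constant size onto $\cA_\Sig$. One then only needs that $\av{\cA_\Sig}\to\infty$ (which is shown to be equivalent to $\Sig\to\infty$ in the sense of Definition~\ref{def:divergence} via elementary divisors) and invokes the equidistribution of large Hecke orbits (Clozel--Oh--Ullmo, Goldstein--Mayer, Eskin--Oh). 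This is both shorter and, importantly, works for \emph{fixed} $\Sig$. The paper explicitly notes that the unipotent approach of~\cite{MS11} forces one to average over the group as well as the generating set; in \S\ref{refinements} the horospherical method (via~\cite{EMSS}) is used only for the special case $\Sig=k\bZ^n$, $m>n$. So your proposed equidistribution argument, as stated, is at best incomplete for general $\Sig$, and you are missing the key structural observation that makes the result go through for arbitrary $\Sig$.

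\textbf{Discretization and the cusp.} Two smaller points. First, the identity $\av{\Sig}^{-1/m}\xi(\cG)=F_\xi(\bar\Lam_s)$ is not exact: for the diameter there is an $O(1)$ discrepancy between the graph diameter and the torus covering radius before scaling (Lemma~\ref{lem:quazi isometric}), and for the $\al$-average distance one must compare a Riemann sum against the integral, which the paper handles uniformly on compacta via an equicontinuity argument (Lemmas~\ref{equicontinuous} and~\ref{lem:moments}). Second, your worries about tightness, uniform integrability, and Rogers' moment bounds are unnecessary. The paper observes that each $\zeta$ in~\eqref{covrad}--\eqref{girthtorus} is continuous and \emph{proper} as a map $X_m\to(0,\infty)$; since the counting measures converge weak-star to the probability measure $m_{X_m}$ (so there is no escape of mass), a short elementary lemma (Lemma~\ref{lem:pushing convergence}) pushes the convergence through $\zeta$ together with the uniform-on-compacta approximation by $\xi_i$. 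No Siegel/Rogers input is needed.
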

The explicit description 
of the random variables giving the limit distributions will be given as soon as we introduce the necessary terminology at 
the end of \S\ref{sec:ld}. 

If one restricts attention to the case $n=1$ and $\xi$ as in~\eqref{cdiam} then Theorem~\ref{main theorem 1} settles~\cite[Conjecture 3]{AGG}. 

Our methods are adaptations of those introduced by Marklof and Str\"ombergsson
in~\cite{MS11}. In some respects we improve on their results where the main input which allows us to do so 
is an observation that certain collections of sublattices of $\bZ^m$ that appear naturally in the discussion are invariant under
the action of $\SL_m(\bZ)$ (similar invariance was noticed and used in~\cite{EMSS}). 
We then invoke equidistribution results for such invariant sets (\cite{GM03, COU01, EO06}). 

We now wish to highlight the differences between our results and the results in~\cite{MS11}. 
The first difference is that in~\cite{MS11} 
only the case $n=1, m\ge 2$ is considered, namely their random graphs are Cayley graphs of $\bZ/k\bZ$.  
The second (and most significant difference) is that in \cite{MS11} the random graph is not 
chosen from $\set{C_{\bZ/k\bZ}(s): s\in \cS_{\bZ/k\bZ}(m)}$ for a given $k$ but from the union 
$\cup_{\ell\le k}\set{C_{\bZ/\ell\bZ}(s): s\in \cS_{\bZ/\ell\bZ}(m)}$ (i.e.\ the group with respect to which the random Cayley
graph is chosen is not fixed), 
which is less natural from the graph theoretic point of view. On the other hand, the discussion in~\cite{MS11} allows to 
choose the generating set $s$ at random with respect to some non-uniform measures on $\cup_{\ell\le k}\cS_{\bZ/\ell\bZ}(m)$, as opposed to 
the statement in Theorem~\ref{main theorem 1} where $s$ is chosen uniformly from $\cS_{\bZ^n/\Sig}(m)$. 
The input that allows~\cite{MS11} to work with non-uniform measures is that in their work they apply a more sophisticated 
equidistribution result from~\cite{M10} than Theorem~\ref{thm:eqdist} that we are using. It is that input that forces them to randomly choose
the group and the generating set and not just the generating set. Recently the equidistribution result from~\cite{M10} was generalized in~\cite{EMSS}.
In~\S\ref{refinements} we prove Theorem~\ref{main theorem 2} which generalize 
Theorem~\ref{main theorem 1} under certain restrictions so that the generating set $s$ would be chosen 
according to a non-uniform measure on $\cS_{\bZ^n/\Sig}(m)$. Similarly to \cite{MS11} the limit distribution does not depend on the law with respect to 
which we choose $s$. The restrictions under which this stronger result holds are that $m>n$ and $\Sig = k\bZ^n$ so that the parameter going to $\infty$
is $k$. If $n=1$ as in~\cite{MS11} this is the general case but in general we do not know at the moment what happens in the case $n=m$ or for general $\Sig$.

We wish to suggest the following problem which is natural from the point of view taken in this paper and leads to an interesting 
equidistribution question in the space of lattices.
\begin{problem}
Let $m>n$, and let $\mb{b}= \set{v_i}_1^n$ be  a basis for $\bZ^n$. For a finite index subgroup  $\Sig<\bZ^n$ 
consider the subset $\cS_{\bZ^n/\Sig}(\mb{b},m) =\set{ s\in \cS_{\bZ^n/\Sig}(m): s\supset \mb{b} \on{mod}\Sig}$ (that is, we restrict attention to the generating sets which contain the reduction of $\mb{b}$ modulo $\Sig$). Is it true that Theorem~\ref{main theorem 1} holds when instead of choosing $s$ at random from $\cS_{\bZ^n/\Sig}(m)$, we choose 
$s$ at random from $\cS_{\bZ^n/\Sig}(\mb{b},m)$.
\end{problem}

\subsection{Outline of the proof}\label{section:outline}
We begin with some notation.
We denote by $X_m$ the space of unimodular lattices on $\bR^m$ (i.e. lattices of covolume 1) and equip it with the natural `uniform' 
probability measure $m_{X_m}$; that is, the unique $\SL_m(\bR)$-invariant Borel probability measure on it. In our discussion we will
often encounter lattices $\Lam< \bR^m$ which are not unimodular. We then denote by  $\bar{\Lam}\in X_m$
the properly scaled lattice $\av{\Lam}^{-1/m}\Lam$. 

The structure of the proof is as follows. 
We will define a map whose fibers have constant cardinality $s\mapsto \Lam_s$ from
$\cS_{\bZ^n/\Sig}(m)$ to the space of subgroups of $\bZ^m$ of index $\av{\Sig}$ with the following two
fundamental properties:
\begin{enumerate}
\item\label{pr2} The Cayley graphs $C_{\bZ^n/\Sig}(s)$ and $C_{\bZ^m/\Lam_s}(I)$ (and the corresponding digraphs) are isomorphic, where $I$ denotes the standard generators of $\bZ^m$.  
\item\label{pr1} The finite collection $\set{\bar{\Lam}_s: s\in \cS_{\bZ^n/\Sig}(m)}$ becomes equidistributed 
in $X_m$ as $\Sig\to\infty$.
\end{enumerate}
The isomorphism in~\eqref{pr2} means that we can study the metric properties of the random graphs $C_{\bZ^m/\Lam_s}(I)$ instead. The 
advantage of these Cayley graphs, which we refer to as \textit{approximate torus graphs}, is that since the generating set is fixed, they are basically a discrete version of the continuous torus $\bR^m/\Lam_s$. We are therefore reduced to study the relevant metric properties of the random torus $\bR^m/\Lam_s$. As the metric parameters we are interested in are scaled in a simple fashion, 
we are further reduced to study the metric properties of a random torus $\bR^m/\bar{\Lam}_s$ as $s\in\cS_{\bZ^n/\Sig}$ is chosen 
uniformly at random. At this point, property~\eqref{pr1}
kicks in and tells us that this random torus is basically a random torus of the form $\bR^m/L$ where $L\in X_m$ is chosen at
random with respect to $m_{X_m}$.
\begin{acknowledgments}
We would like to thank Jens Marklof, Manfred Einsiedler, Shahar Mozes, and Nimish Shah, for useful discussions.
We would also like to thank the anonymous referees for spotting some inaccuracies and helping to improve the paper.
The authors acknowledge the support of ISF grant 357/13.
\end{acknowledgments}

\section{Preparations}
\subsection{The limit distributions}\label{sec:ld} 
As stated in Theorem~\ref{main theorem 1}, the limit distributions that appear 
in our discussion will be given in terms of functions $\zeta:X_m\to\bR$ which we now turn to discuss. Let $\mb{B}$ denote
the following subset of $\bR^m$ or $\bR^m_+\defi\set{v\in\bR^m: \forall i\; v_i>0}$ according to the case,
\begin{enumerate}
\item\label{balld} $\mb{B} =\set{v\in\bR^m : \sum_1^m \av{v_i}<1}$  in the undirected case,
\item\label{ballud} $\mb{B} =\set{v\in\bR^m_+ : \sum_1^m v_i<1}$ in the directed case.
\end{enumerate}
This is nothing but the unit ball (or its intersection with $\bR^m_+$) with respect to the $\ell_1$-metric. For $x,y\in\bR^n$ we
define 
\begin{equation}\label{eq:distance}
\on{d}_{\bR^m}(x,y) = \inf\set{t>0: y\in x+t\mb{B}}.
\end{equation}
 In the undirected case~\eqref{balld} this is the usual $\ell_1$-distance
from $x$ to $y$ and in the directed case~\eqref{ballud} this is a non symmetric distance which is finite only if $y\in x+\bR^m_+$. Given a lattice $\Lam<\bR^m$ the (ns-)distance $\on{d}_{\bR^m}$ descends to a finite (ns-)distance on the torus 
$\bR^m/\Lam$ defined by 
\begin{equation}\label{torusdistance}
\on{d}_{\bR^m/\Lam}(x+\Lam,y+\Lam) = \inf\set{ \on{d}_{\bR^m}(x,y+v):v\in\Lam}.
\end{equation} 
In fact, the quantity on the right is a minimum due to the discreteness of $\Lam$.

Corresponding to the choice~\eqref{cdiam}-\eqref{cgirth} of $\xi$ in Theorem~\ref{main theorem 1} we have the following list of functions $\zeta$ on lattices $\Lam< \bR^m$:
\begin{enumerate}[(A)]
\item\label{covrad} If $\xi$ is chosen as in~\eqref{cdiam} we shall denote 
$$\zeta(\Lam)=\inf\set{t>0 : \Lam+t\mb{B} = \bR^m}$$ 
where $\mb{B}$ is chosen as in~\eqref{balld} or \eqref{ballud} according to whether we are in the undirected 
or the directed case respectively. This is the \textit{covering radius} of $\Lam$ with respect to $\mb{B}$.
\item\label{avgdisttorus} If $\xi$ is chosen as in~\eqref{cavgdist} with $\al>0$ fixed, 
we shall denote 
\begin{align*}
\zeta(\Lam)^\al &= \frac{1}{(\lam(\bR^m/\Lam))^2}\int_{(\bR^m/\Lam)^2 }\on{d}_{\bR^m/\Lam}^\al(x,y)d\lam(x)d\lam(y)\\
&=\frac{1}{\lam(\bR^m/\Lam)}\int_{\bR^m/\Lam}\on{d}_{\bR^m/\Lam}^\al(0,x)d\lam(x),
\end{align*}
where
$\lam$ is the Lebegues measure on $\bR^m$ and $\on{d}_{\bR^m/\Lam}$ is the distance (resp.\ ns-distance) defined
above according to the case.
\item\label{girthtorus} If $\xi$ is chosen as in~\eqref{cgirth}, we shall denote 
$$\zeta(\Lam) = \inf\set{t>0 : t\mb{B} \cap\Lam\ne\set{0}},$$
where $\mb{B}$ is as in~\eqref{ballud}. This is the \textit{length of the shortest vector} in $\Lam$ with respect to the
ns-distance $\on{d}_{\bR^m}$.
\end{enumerate}
In Theorem~\ref{main theorem 1} the random variables converge in distribution to the random variable $\zeta$ on $(X_m,m_{X_m})$, where $\zeta$ is as in~\eqref{covrad}-\eqref{girthtorus} according to the choice of $\xi$ as in~\eqref{cdiam}-\eqref{cgirth}.

\subsection{Topological and measure theoretic notions}
We review some relevant notions and prove two lemmas that will be used in the proof.
Given a locally compact Hausdorff space $X$, and a sequence of Borel probability measures on it, $\mu_i$, we say that 
$\mu_i$ converges in the weak-star topology to a Borel probability measure $\mu$ on $X$ and denote $\mu_i\wstar \mu$ if for any
continuous function with compact support $f\in C_c(X)$, $\int fd\mu_i\to\int fd\mu$. 

A continuous function $\zeta:X\to\bR$ is said to be \textit{proper} if 
the pre-image of compact sets is compact. The importance of properness to our discussion is that if $\zeta$ is proper and 
$f\in C_c(\bR)$ then $f\comp\zeta\in C_c(X)$. This allows us to translate weak-star convergence of measures on $X$ to weak-star
convergence of measures on $\bR$ which is exactly the notion of convergence in distribution appearing in Theorem~\ref{main theorem 1}. This will be used in our discussion in the form of the following lemma.
\begin{lemma}\label{lem:pushing convergence}
Let $X$ be a locally compact Hausdorff space and let $\mu_i\wstar \mu$ be a converging sequence of Borel  
probability measures on $X$. Let $\zeta:X\to(0,\infty)$ be a continuous proper function and let $\xi_i:\supp{\mu_i}\to(0,\infty)$ be 
measurable functions such that on every compact set $K\subset X$, $\sup\set{\av{\zeta(x)-\xi_i(x)}:x\in K\cap\supp{\mu_i}}\to 0$
as $i\to\infty$. Then $(\xi_i)_*\mu_i\wstar \zeta_*\mu$ or in other words, the random variables $\xi_i$ on $(X,\mu_i)$ converge 
in distribution to the random variable $\zeta$ on $(X,\mu)$.
\end{lemma}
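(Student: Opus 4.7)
The plan is to fix a test function $f\in C_c((0,\infty))$ and show $\int f(\xi_i)\,d\mu_i\to\int f(\zeta)\,d\mu$ by using the triangle inequality
\eq{
\int f(\xi_i)\,d\mu_i-\int f(\zeta)\,d\mu=\int\pa{f(\xi_i)-f(\zeta)}\,d\mu_i+\pa{\int f(\zeta)\,d\mu_i-\int f(\zeta)\,d\mu}.
}
The second bracket vanishes in the limit for free: since $\zeta$ is proper and continuous, $f\comp\zeta\in C_c(X)$, so $\mu_i\wstar\mu$ applies directly. Hence the whole task is to show that $\int|f(\xi_i)-f(\zeta)|\,d\mu_i\to 0$.

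For this I first need tightness of $\set{\mu_i}$. This is not stated as a hypothesis, but in an LCH space exhaustion by a sequence of relatively compact open sets, combined with monotone convergence applied to the probability measure $\mu$, yields compactly supported test functions $g_N\in C_c(X)$ with $0\le g_N\le 1$ and $\int g_N\,d\mu>1-\veps$; then $\mu_i\wstar\mu$ forces $\mu_i(\supp g_N)\ge\int g_N\,d\mu_i>1-2\veps$ for $i$ large, which is tightness.

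Now fix $\veps>0$ and pick $[a,b]\subset(0,\infty)$ containing $\supp f$ and a parameter $\del\in(0,a)$. Set $K\defi\zeta^{-1}([a-\del,b+\del])$, which is compact by properness, and enlarge it (using tightness) to a compact $K'\supset K$ with $\mu_i(X\setminus K')<\veps$ for all large $i$. By the uniform convergence hypothesis on $K'$, we have $\sup_{x\in K'\cap\supp\mu_i}\av{\xi_i(x)-\zeta(x)}<\del/2$ for $i$ large, so uniform continuity of $f$ bounds the integral over $K$ by $\veps$. Off $K$, $f(\zeta)=0$, and the integral splits into $K'\setminus K$ and $X\setminus K'$. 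On $K'\setminus K$ we have $\zeta\notin[a-\del,b+\del]$, so uniform convergence forces $\xi_i\notin[a,b]$ and hence $f(\xi_i)=0$; on $X\setminus K'$ we bound trivially by $\norm f_\infty\,\mu_i(X\setminus K')<\norm f_\infty\veps$. Combining these gives $\int\av{f(\xi_i)-f(\zeta)}\,d\mu_i\le(1+\norm f_\infty)\veps$ eventually, which finishes the proof.

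The main obstacle is the interplay between the two sources of noncompactness: the integrand $f(\xi_i)$ depends on $\xi_i$ which is only controlled on compacts where $\xi_i\to\zeta$ uniformly, while the measures $\mu_i$ themselves could \emph{a priori} place mass wherever. Everything hinges on using tightness of $\set{\mu_i}$ (a consequence of $\mu$ being a probability measure together with $\sigma$-compactness of $X$) to confine the analysis to a compact set on which the uniform convergence takes effect.
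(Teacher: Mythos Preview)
Your argument is correct and follows essentially the same route as the paper's: split via $\int f(\zeta)\,d\mu_i$, handle one half by $f\comp\zeta\in C_c(X)$, and the other by tightness of $\{\mu_i\}$ combined with the uniform convergence of $\xi_i$ to $\zeta$ on compacts. The paper uses a slightly leaner two-region split (it bounds $|f(\xi_i)-f(\zeta)|\le\norm{f}_\infty$ crudely on the complement of a single compact $K\supset\zeta^{-1}(\supp f)$ rather than introducing your buffer zone $K'\smallsetminus K$), and it glosses over the $\sigma$-compactness needed for tightness that you rightly flag, but the substance is the same.
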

\begin{proof}
Let $f\in C_c((0,\infty))$. Our goal is to show that $\av{\int_X f(\xi_i(x))d\mu_i(x)-\int_Xf(\zeta(x))d\mu(x)}\to 0$. 
To this end, let $\eps>0$ and 
let $\del>0$ be such that for $t,s\in(0,\infty)$ with $\av{t-s}<\del$ we have $\av{f(t)-f(s)}<\eps$. Let $K\subset X$ be a 
compact set containing $\zeta^{-1}(\supp{f})$ (which is compact by the properness assumption), such that for all 
large enough $i$, $\mu_i(X\smallsetminus K)<\norm{f}^{-1}\eps$. The existence of such a set follows from the fact that $\mu$
is a Borel probability measure and that $\mu_i\wstar \mu$.
Our assumptions imply that for all large enough $i$ we also have, $ \av{\zeta(x)-\xi_i(x)}<\del$ for $x\in K\cap\supp{\mu_i}$. We therefore have for
such $i$,
$$\av{\int_X f(\xi_i(x))d\mu_i(x)-\int_Xf(\zeta(x))d\mu_i(x)}\le \int_{K}\eps  d\mu_i + 
\int_{X\smallsetminus K} \norm{f}_\infty d \mu_i\le 2\eps.$$
Finally, by the assumed convergence $\mu_i\wstar \mu$ and the properness
of $\zeta$ we conclude that $f\comp \zeta\in C_c(X)$ and that for all large enough $i$, 
$\av{\int_X f(\zeta(x))d\mu_i(x)-\int_Xf(\zeta(x))d\mu(x)}<\eps$. Together this gives that for all large enough $i$, 
$\av{\int_X f(\xi_i(x))d\mu_i(x)-\int_Xf(\zeta(x))d\mu(x)}<3\eps$ which concludes the proof.
\end{proof}
At some point in the proof we will need to use the equicontinuity of a family of functions which 
are induced from the function $f(x) = \on{d}_{\bR^m}^\al(0,x)$, where $\on{d}_{\bR^m}$ is either the distance or the ns-distance 
introduced in \S\eqref{eq:distance} which is defined for $x$ in $\bR^m$ or $\bR^m_+$ respectively. The following lemma achieves that.
\begin{lemma}\label{equicontinuous}
Let $f$ be a continuous non-negative function on either $\bR^m$ or $\bR^m_+$  such that $\lim_{x\to\infty}f(x) = \infty$. 
For a lattice $\Lam<\bR^m$ define a 
$\Lam$-invariant function on $\bR^m$ by $f_{\bR^m/\Lam}(x)= \min\set{f(x+v):v\in\Lam}$. Then, for any compact set 
$K\subset \bR^m$ or $\bR^m_+$,
the family of restrictions $\set{ f_{\bR^m/\Lam}|_K}$ (as $\Lam$ varies), is equicontinuous.
\end{lemma}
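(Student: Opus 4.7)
The plan is to produce a common modulus of continuity for the family $\set{f_{\bR^m/\Lam}|_K}$ that depends only on $f$ and $K$, not on $\Lam$. The key reduction is that, by the hypothesis $\lim_{x\to\infty}f(x)=\infty$, for each $x\in K$ any lattice translate $x+v^*$ achieving $f_{\bR^m/\Lam}(x)$ must lie in a compact set that is independent of $\Lam$; the uniform continuity of $f$ on that fixed compact set will then transfer to the whole family.

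More concretely, I would first set $M_0\defi\sup_{x\in K}f(x)<\infty$, finite by compactness of $K$ and continuity of $f$. Since $v=0$ is always admissible (as $K$ lies in the domain of $f$), this yields the uniform upper bound $f_{\bR^m/\Lam}(x)\le f(x)\le M_0$ for all $x\in K$ and all $\Lam$. Using the growth hypothesis I would then choose $R>0$ such that $f(y)>M_0+1$ whenever $|y|>R$ in the domain; any minimizer $v^*\in\Lam$ for $x\in K$ must then satisfy $|x+v^*|\le R$, for otherwise $f(x+v^*)>M_0+1$ would contradict minimality. By continuity of $f$ on the compact set $C\defi\ol{B(0,R+1)}$ (intersected with the domain) $f$ is uniformly continuous there, so given $\veps>0$ I can pick $\del\in(0,1)$ such that $y_1,y_2\in C$ with $|y_1-y_2|<\del$ forces $|f(y_1)-f(y_2)|<\veps$.

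Given $x_1,x_2\in K$ with $|x_1-x_2|<\del$, I would then pick $v^*\in\Lam$ minimizing at $x_1$, so that $|x_1+v^*|\le R$ and hence $|x_2+v^*|\le R+1$; both points lie in $C$, yielding
\[
f_{\bR^m/\Lam}(x_2)\le f(x_2+v^*)\le f(x_1+v^*)+\veps=f_{\bR^m/\Lam}(x_1)+\veps.
\]
Swapping the roles of $x_1$ and $x_2$ gives $|f_{\bR^m/\Lam}(x_1)-f_{\bR^m/\Lam}(x_2)|\le\veps$, and since $\del$ depends only on $f$, $K$, and $\veps$, this is the required equicontinuity.

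The main technical point is the directed case, where the domain is $\bR^m_+$: one must ensure $x_2+v^*$ stays in the domain, which could fail if $x_1+v^*$ sits near $\partial\bR^m_+$. This is resolved by working with the continuous extension of $f$ to $\ol{\bR^m_+}$, which is automatic for the concrete functions $f(y)=(\sum_i y_i)^\al$ arising from the ns-distance in \S\ref{sec:ld} (they extend by $f(0)=0$). With this extension in hand, the argument above goes through verbatim.
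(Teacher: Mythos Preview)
Your argument is essentially the same as the paper's: bound $f_{\bR^m/\Lam}$ on $K$ by $\sup_K f$ via $v=0$, use the growth hypothesis to trap every minimizing translate $x+v^*$ in a fixed compact set independent of $\Lam$, and then transfer the uniform continuity of $f$ on that enlarged compact set to the whole family. The paper phrases the enlargement as ``choose $K'\supset K$ large enough that $K+v_x\subset K'$'' rather than via an explicit radius $R$, but the content is identical; your handling of the boundary issue in the $\bR^m_+$ case (via continuous extension of $f$ to $\ol{\bR^m_+}$) is in fact more explicit than the paper's, which glosses over this point.
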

\begin{proof}
Let $K'\supset K$ be a larger compact set to be chosen shortly. 
Given $\eps>0$ by the uniform continuity of $f$ on $K'$ we may choose 
$\del>0$ such that if $x,y\in K'$ are such that $\norm{x-y}\le \del$, then $\av{f(x)-f(y)}<\eps$.

Let $\Lam$ be a lattice and let $x,y\in K$ be such that $\norm{x-y}\le \del$. Assume without loss of generality that $f_{\bR^m/\Lam}(x)\le f_{\bR^m/\Lam}(y)$ 
and let $v_x\in\Lam$ by such that $f_{\bR^m/\Lam}(x)=f(x+v_x)$. Because $f$ diverges at $\infty$ we know that if $K'$ is chosen 
large enough then $K+v_x\subset K'$ and therefore $f_{\bR^m/\Lam}(x)\le f_{\bR^m/\Lam}(y)\le f(y+v_x)\le f(x+v_x)+\eps = f_{\bR^m/\Lam}(x)+\eps$.
\end{proof}

\section{Approximated tori graphs}
In light of the outline described in \S\ref{section:outline}, we turn now to discuss a family of graphs we refer to as approximated tori graphs. The goal of this section is to prove Corollary~\ref{cor:1109} which isolates an important component in the proof of 
Theorem~\ref{main theorem 1}.
\begin{definition}\label{def:apptor}
Let $\Lam<\bZ^m$ be a finite index subgroup and let $I$ be the standard basis of $\bR^m$. 
The \textit{approximate torus graph} (resp.\ \textit{digraph}) associated to $\Lam$ is defined to be the Cayley graph $C_{\bZ^m/\Lam}(I)$ (resp.\
Cayley digraph $C_{\bZ^m/\Lam}^+(I)$).
\end{definition}
We say that two (ns-)metric spaces $(X,\on{d}_X),(Y,\on{d}_Y)$ are \textit{of bounded distance} 
if there are maps $\vphi:X\to Y,\psi:Y\to X$
and a constant $C$ such that for all $x_1,x_2\in X$, $\av{\on{d}_X(x_1,x_2)-\on{d}_Y(\vphi(x_1),\vphi(x_2))}\le C$ and 
for all $y_1,y_2\in Y$, $\av{\on{d}_Y(y_1,y_2) - \on{d}_X(\psi(y_1),\psi(y_2))}\le C$. 
We refer to $C$ as a \textit{distance bound}.
The reason for the terminology in Definition~\ref{def:apptor} is
the following lemma which is left to the reader (cf.\ \cite[equation (2.14)]{MS11}).
\begin{lemma}\label{lem:almost isometric}
As $\Lam<\bZ^m$ ranges over the finite index subgroup, the (ns-)metric spaces $(\cG,\on{d}_\cG)$, 
$(\bR^m/\Lam,\on{d}_{\bR^m/\Lam})$ are of bounded distance with a uniform distance bound 
which depends only on the dimension $m$. Here $\cG=C_{\bZ^m/\Lam}(I)$ or $C_{\bZ^m/\Lam}^+(I)$ and 
$\on{d}_{\bZ^m/\Lam}$ is the distance or ns-distance defined in~\eqref{torusdistance} respectively.
\end{lemma}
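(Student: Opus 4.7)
The plan is to exhibit explicit maps $\varphi$ and $\psi$ and check both inequalities directly. Take $\varphi:\bZ^m/\Lam\hookrightarrow\bR^m/\Lam$ to be the inclusion, and $\psi:\bR^m/\Lam\to\bZ^m/\Lam$ the componentwise floor, $\psi(\tilde{y}+\Lam)=\lfloor\tilde{y}\rfloor+\Lam$; this $\psi$ is well defined because two lifts of the same point in $\bR^m/\Lam$ differ by an element of $\Lam\subset\bZ^m$, and coordinatewise floor is additive under integer shifts. The key combinatorial input is that both the graph and the torus distances are obtained by optimizing the same $\ell_1$-type functional over $\Lam$-translates, and the only discrepancy between the two optimization problems comes from the fractional-parts error $\epsilon:=\{\tilde{y}_2\}-\{\tilde{y}_1\}\in(-1,1)^m$ arising when one compares $\tilde{y}_i$ with $\lfloor\tilde{y}_i\rfloor$; in particular $\|\epsilon\|_1<m$.

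First I would check that the $\varphi$-direction of the bounded-distance inequality is exact: for integer lifts $\tilde{x}_i\in\bZ^m$ of $x_i$, both $\on{d}_\cG(x_1,x_2)$ and $\on{d}_{\bR^m/\Lam}(\varphi(x_1),\varphi(x_2))$ equal $\inf_{v\in\Lam}\on{d}_{\bR^m}(0,\tilde{x}_2-\tilde{x}_1+v)$, and since $\tilde{x}_2-\tilde{x}_1+v$ stays in $\bZ^m$ (as $\Lam\subset\bZ^m$), the two infima are taken over the same set and therefore agree. For the $\psi$-direction in the undirected case, the identity $\lfloor\tilde{y}_2\rfloor-\lfloor\tilde{y}_1\rfloor=(\tilde{y}_2-\tilde{y}_1)-\epsilon$ together with the triangle inequality for $\|\cdot\|_1$ gives, for every $v\in\Lam$,
\begin{equation*}
\bigl|\,\|\tilde{y}_2-\tilde{y}_1+v\|_1-\|(\tilde{y}_2-\tilde{y}_1-\epsilon)+v\|_1\,\bigr|\le\|\epsilon\|_1\le m,
\end{equation*}
and passing to the infima yields $|\on{d}_{\bR^m/\Lam}(y_1,y_2)-\on{d}_\cG(\psi(y_1),\psi(y_2))|\le m$, so a distance bound $C=m$ works.

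The main subtlety, and the step I expect to be the main obstacle, is the directed case, because of the feasibility constraint $u\in\overline{\bR^m_+}$ in the definition of $\on{d}^+_{\bR^m/\Lam}$. One inequality goes through verbatim: starting from a torus-optimal $u^*=\tilde{y}_2-\tilde{y}_1+v^*\in\overline{\bR^m_+}$, the integer vector $w:=u^*-\epsilon=\lfloor\tilde{y}_2\rfloor-\lfloor\tilde{y}_1\rfloor+v^*$ satisfies $w_i>u^*_i-1\ge-1$, hence $w_i\in\bZ_{\ge 0}$, and $\sum w_i\le\sum u^*_i+m$, which yields $\on{d}^+_\cG(\psi(y_1),\psi(y_2))\le \on{d}^+_{\bR^m/\Lam}(y_1,y_2)+m$. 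The reverse inequality is the harder point: given a graph-optimal $w^*\in\bZ_{\ge 0}^m$, the naive candidate $u:=w^*+\epsilon$ can have strictly negative coordinates precisely at the indices $i$ where $w^*_i=0$ and $\epsilon_i<0$, making it infeasible for the torus minimization. Following the argument underlying \cite[equation (2.14)]{MS11}, one corrects this by modifying $w^*$ within its $\Lam$-coset so that the resulting real translate lies in $\overline{\bR^m_+}$, with the extra $\ell_1$-weight bounded only in terms of $m$; combining this with the previous inequality gives the uniform distance bound $C=C(m)$ asserted in the lemma.
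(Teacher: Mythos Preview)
The paper does not prove this lemma --- it is explicitly left to the reader with a reference to \cite[equation (2.14)]{MS11} --- so there is nothing to compare against and I assess your argument on its own. Your $\varphi$-direction (exact equality on integer points) and your undirected $\psi$-direction (bound $m$ via the $\ell_1$ triangle inequality) are correct, as is the one-sided inequality $\on{d}_\cG(\psi(y_1),\psi(y_2))\le \on{d}_{\bR^m/\Lam}(y_1,y_2)+m$ in the directed case.

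The genuine gap is the reverse $\psi$-inequality in the directed case, and your proposed fix does not work. Take $\Lam=N\bZ^m$, $y_1=\tfrac12\mb{e}_1$, $y_2=\tfrac{2}{5}\mb{e}_1$: then $\lfloor y_1\rfloor=\lfloor y_2\rfloor=0$, so $w^*=0$ and $\on{d}_\cG(\psi(y_1),\psi(y_2))=0$, while $\on{d}_{\bR^m/\Lam}(y_1,y_2)=N-\tfrac{1}{10}$; the cheapest modification of $w^*$ within its coset $N\bZ^m$ making $w^*+\eps$ nonnegative is $N\mb{e}_1$, which costs $N$ rather than $O_m(1)$. In fact no choice of $\psi$ rescues the statement for this family: since $\psi$ has finite image, pigeonhole yields $y\ne y'$ in the torus with $\psi(y)=\psi(y')$, and then one of $\on{d}_{\bR^m/\Lam}(y,y')$, $\on{d}_{\bR^m/\Lam}(y',y)$ is at least $N/2$ while the corresponding graph distance is $0$. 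So the lemma, read literally in the directed case, appears to be too strong. What the paper actually uses downstream (Lemma~\ref{lem:quazi isometric}) is only the exact $\varphi$-direction together with the diameter comparison $\diam{\bR^m/\Lam}=\diam{\cG}+O(1)$, and the latter follows directly without the full bounded-distance claim: for $y\in\bR^m$ choose $w\in\bZ_{\ge 0}^m$ with $w\equiv\lfloor y\rfloor\pmod{\Lam}$ and $\sum w_i\le\diam{\cG}$, and observe that $\{y\}+w\in\bR_{\ge 0}^m$ witnesses $\on{d}_{\bR^m/\Lam}(0,y)\le\diam{\cG}+m$.
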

The next two lemmas show that the metric parameters in~\eqref{cdiam}, \eqref{cavgdist}, \eqref{cgirth}, appearing 
in Theorem~\ref{main theorem 1}
of an approximated torus graph or digraph $\cG$,
are naturally linked with the corresponding quantities in~\eqref{covrad}, \eqref{avgdisttorus}, \eqref{girthtorus}, 
of the lattice giving rise to 
the approximated torus graph or digraph. 
The first lemma deals with the diameter and the girth, and the second lemma, which is 
more subtle deals with
the average distance and its moments.
\begin{lemma}\label{lem:quazi isometric}
Let $\xi$ be as in~\eqref{cdiam} or \eqref{cgirth} and $\zeta$ be as in~\eqref{covrad} or \eqref{girthtorus} respectively.
Then, as
$\Lam<\bZ^m$ ranges over the finite index subgroup,  
\begin{align}
\label{eq:1215}&\av{\xi(\cG) - \zeta(\Lam)} = O(1),\\
\label{eq:1216}&\av{\av{\Lam}^{-1/m}\xi(\cG) - \zeta(\bar{\Lam})}=O(\av{\Lam}^{-1/m}),
\end{align}
 where $\cG=C_{\bZ^m/\Lam}(I)$ or $C_{\bZ^m/\Lam}^+(I)$ according to the case.
\end{lemma}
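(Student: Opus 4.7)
The plan is to establish \eqref{eq:1215} in both the diameter and girth cases, and then derive \eqref{eq:1216} as an immediate consequence of \eqref{eq:1215} together with the degree-$1$ homogeneity of $\zeta$.

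For the diameter case I would invoke Lemma~\ref{lem:almost isometric}: it supplies bounded-distance maps $\varphi:\cG\to\bR^m/\Lam$ and $\psi:\bR^m/\Lam\to\cG$ with a uniform distortion constant $C=C(m)$, from which the elementary two-line estimate $\av{\on{diam}(\cG)-\on{diam}(\bR^m/\Lam)}\le C$ follows by applying the triangle inequality-style bounds to the maximizing pairs on each side. On the torus side, translation invariance of the $\ell_1$-based (ns-)metric on $\bR^m$ gives $\on{diam}(\bR^m/\Lam)=\sup_x\on{d}_{\bR^m/\Lam}(0,x)$, which by the definition of the covering radius with respect to $\mathbf{B}$ is exactly $\zeta(\Lam)$. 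Combining these yields \eqref{eq:1215} in this case.

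For the girth case I would first identify $\on{girth}(\cG)$ with a lattice-theoretic minimum: a directed cycle through $0$ of length $\ell$ in $C^+_{\bZ^m/\Lam}(I)$ corresponds precisely to a sequence of standard basis vectors $e_{i_1},\dots,e_{i_\ell}$ whose sum $v=\sum_j e_{i_j}\in\Lam$ lies in $\bZ_{\ge 0}^m$ with $\|v\|_1=\ell$, and conversely every such $v$ produces a cycle of length $\|v\|_1$. Hence
\[
\on{girth}(\cG)=\min\set{\|v\|_1:v\in\Lam\cap\bZ_{\ge 0}^m\setminus\set{0}}.
\]
Since $\Lam\subset\bZ^m$, the real lattice points in the closed positive orthant coincide with the non-negative integer ones, so this minimum matches the shortest-vector length $\zeta(\Lam)$ from \eqref{girthtorus} up to an $O(1)$ correction depending only on $m$, arising from the boundary/positivity convention in the definition of $\mathbf{B}$. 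This yields \eqref{eq:1215} in the girth case.

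For \eqref{eq:1216}, both the covering radius and the shortest-vector length are degree-$1$ homogeneous in the lattice, so $\zeta(\bar\Lam)=\av{\Lam}^{-1/m}\zeta(\Lam)$; dividing \eqref{eq:1215} through by $\av{\Lam}^{1/m}$ then gives \eqref{eq:1216} immediately. The main obstacle will be the girth case, where I must control the residual gap between $\on{girth}(\cG)$ and $\zeta(\Lam)$ uniformly in $\Lam$, carefully navigating the strictly-positive-versus-non-negative convention in the definition of $\mathbf{B}$; the rest of the argument is a reshuffling of definitions together with Lemma~\ref{lem:almost isometric}.
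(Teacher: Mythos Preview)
Your overall strategy coincides with the paper's: derive \eqref{eq:1216} from \eqref{eq:1215} via the homogeneity $\zeta(t\Lam)=t\zeta(\Lam)$; handle the diameter case by combining Lemma~\ref{lem:almost isometric} with the identity $\on{diam}(\bR^m/\Lam)=\zeta(\Lam)$; and handle the girth case by identifying $\on{girth}(\cG)$ with a lattice minimum. The paper argues in exactly this order and with the same ingredients.

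The gap is in your girth step. You correctly obtain $\on{girth}(\cG)=\min\{\norm{v}_1:v\in\Lam\cap\bZ_{\ge0}^m\smallsetminus\{0\}\}$, but then claim this matches $\zeta(\Lam)$ ``up to an $O(1)$ correction \dots\ arising from the boundary/positivity convention in $\mb{B}$.'' Under the stated convention $\mb{B}\subset\bR^m_+=\{v:v_i>0\ \forall i\}$ this is false: $\zeta(\Lam)$ then ignores lattice vectors having a zero coordinate, and the discrepancy is unbounded (for $m=2$, $\Lam=2\bZ\times N\bZ$ one gets $\on{girth}(\cG)=2$ from $(2,0)$, while the shortest strictly positive lattice vector is $(2,N)$, so $\zeta(\Lam)=N+2$). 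There is no $O(1)$ middle ground to aim for. The paper instead asserts the \emph{exact} equality $\zeta(\Lam)=\on{girth}(\cG)$, which is what holds once $\mb{B}$ is read as lying in the closed orthant $\{v_i\ge0\}$; with that reading your combinatorial identification already yields equality with zero error, and the ``main obstacle'' you anticipate evaporates. Replace the $O(1)$ hedge by the exact identity (and, if you like, remark that the open/closed distinction in $\mb{B}$ does not affect the limit law, since the boundary of the orthant is $m_{X_m}$-null).
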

\begin{proof}
First note that the quantities $\zeta$ that we consider are homogeneous in the sense that $\zeta(t\Lam) = t\zeta(\Lam)$ and therefore
\eqref{eq:1216} follows immediately from \eqref{eq:1215}.

Regarding case~\eqref{cdiam}: It is straightforward from the definition of the covering radius that
$\diam{\bR^m/\Lam}= \zeta(\Lam)$. Lemma~\ref{lem:almost isometric} implies that $\diam{\bR^m/\Lam}=\diam{\cG}+O(1)$, 
where $\cG =C_{\bZ^m/\Lam}(I)$ or $C_{\bZ^m/\Lam}^+(I)$ according to the case.
The validity of~\eqref{eq:1215} follows. 

Regarding case~\eqref{cgirth}: It is straightforward to show that in fact $\zeta(\Lam) = \on{girth}(C_{\bZ^m/\Lam}^+(I))$ and so~\eqref{eq:1215} holds with $O(1)$ replaced by zero. 
\end{proof}
Before stating the next lemma we introduce some notation. Given $\Lam<\bZ^m$ we denote by $\nu_{\Lam}$ the normalized counting measure supported on the finite set $\bZ^m/\Lam$ viewed as a subset of the torus $\bR^m/\Lam$. Let $\bar{\nu}_\Lam$
denote the image measure on the scaled torus $\bR^m/\bar{\Lam}$. Because of  
the scaling properties of the $\al$'th power of the (ns)-distances we have the following equality
which expresses $\xi(\cG)$ for $\xi$ as in~\eqref{cavgdist} in a form that resembles $\zeta$ as in~\eqref{avgdisttorus}. 
Here $\cG$ is $C_{\bZ^m/\Lam}(I)$ or $C_{\bZ^m/\Lam}^+(I)$ according to the case.
\begin{align}\label{eq:1120}
\xi(\cG)^\al&=\av{\on{V}_\cG}^{-2}\sum_{x,y\in \on{V}_\cG }\on{d}_\cG^\al(x,y) = \int_{(\bR^m/\Lam)^2}\on{d}_{\bR^m/\Lam}^\al(x,y)d\nu_\Lam(x)d\nu_\Lam(y)\\
\nonumber &=\int_{\bR^m/\Lam}\on{d}_{\bR^m/\Lam}^\al(0,x)d\nu_\Lam(x)
=\av{\Lam}^{\frac{\al}{m}}\int_{\bR^m/\bar{\Lam}}\on{d}_{\bR^m/\bar{\Lam}}^\al(0,x) d\bar{\nu}_\Lam(x).
\end{align}
\begin{lemma}\label{lem:moments}
Let $\xi$ be as in~\eqref{cavgdist} and $\zeta$ as in~\eqref{avgdisttorus}. 
Given a compact subset $K\subset X_m$, as $\av{\Lam}\to\infty$, where $\Lam<\bZ^m$ satisfies $\bar{\Lam}\in K$, one has
that $\av{\av{\Lam}^{-\frac{1}{m}}\xi(\cG) - 
\zeta(\bar{\Lam})}\to 0,$
where $\cG$ is $C_{\bZ^m/\Lam}(I)$ (resp.\ $C_{\bZ^m/\Lam}^+(I)$) and $\on{d}_{\bR^m/\bar{\Lam}}$ is the distance 
(resp. ns-distance) defined in~\eqref{torusdistance} according to the case.
\end{lemma}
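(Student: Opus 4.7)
The plan is to use the identity~\eqref{eq:1120} to rewrite
$$\av{\Lam}^{-1/m}\xi(\cG) = \pa{\int_{\bR^m/\bar{\Lam}} F_\Lam\, d\bar{\nu}_\Lam}^{1/\al}, \qquad \zeta(\bar{\Lam}) = \pa{\int_{\bR^m/\bar{\Lam}} F_\Lam\, d\lam}^{1/\al},$$
where $F_\Lam(x) := \on{d}_{\bR^m/\bar{\Lam}}^\al(0,x)$ and $\lam$ is the Lebesgue probability measure on the unimodular torus $\bR^m/\bar{\Lam}$. Since $\on{d}_{\bR^m/\bar{\Lam}}(0,\cdot)$ is bounded by the (ns-)covering radius of $\bar{\Lam}$, a continuous function of $\bar{\Lam}$ hence bounded on the compact $K$, the quantities $F_\Lam$ are uniformly bounded on the torus over $\bar{\Lam}\in K$. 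By uniform continuity of $t\mapsto t^{1/\al}$ on any bounded interval of $[0,\infty)$, the lemma reduces to showing
$$\av{\int F_\Lam\, d\bar{\nu}_\Lam - \int F_\Lam\, d\lam} \lra 0$$
as $\av{\Lam}\to\infty$, uniformly in $\bar{\Lam}\in K$.

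The second step reinterprets this difference as the error in a Riemann-sum approximation. The support of $\bar{\nu}_\Lam$ is the image of $\bZ^m/\Lam$ under the scaling $x\mapsto\av{\Lam}^{-1/m}x$, namely the finite subgroup $\av{\Lam}^{-1/m}\bZ^m/\bar{\Lam}$ of the torus, of cardinality $\av{\Lam}$, and $\bar{\nu}_\Lam$ assigns weight $\av{\Lam}^{-1}$ to each of its points. Because $\Lam\subset\bZ^m$, the lattice $\bar{\Lam}$ is a sublattice of $\av{\Lam}^{-1/m}\bZ^m$, so the box $Q:=[0,\av{\Lam}^{-1/m})^m$, a fundamental parallelepiped of $\av{\Lam}^{-1/m}\bZ^m$ of volume $\av{\Lam}^{-1}$ and Euclidean diameter $\sqrt{m}\,\av{\Lam}^{-1/m}$, translates under representatives $p$ of $\av{\Lam}^{-1/m}\bZ^m/\bar{\Lam}$ to a tiling of a fundamental domain of $\bar{\Lam}$. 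Hence
$$\av{\int F_\Lam\, d\bar{\nu}_\Lam - \int F_\Lam\, d\lam} \le \sum_p \int_{p+Q}\av{F_\Lam(p)-F_\Lam(y)}\, d\lam(y)\le \om_\Lam\!\pa{\sqrt{m}\,\av{\Lam}^{-1/m}},$$
where $\om_\Lam$ denotes a modulus of continuity of $F_\Lam$ on a fixed bounded set containing the chosen fundamental domain.

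The third step applies Lemma~\ref{equicontinuous} to $f(x)=\on{d}_{\bR^m}^\al(0,x)$, which is continuous, nonnegative, and satisfies $f(x)\to\infty$ as $\norm{x}\to\infty$, in the undirected case on $\bR^m$ and in the directed case on $\bR^m_+$. Since $K\subset X_m$ is compact, by Mahler's criterion and continuity of the covering radius one can choose, for every $\bar{\Lam}\in K$, a fundamental domain contained in a single bounded $K'\subset\bR^m$; in the directed case $K'$ may be arranged to lie in $\bR^m_+$ after translating by an appropriate vector of $\bar{\Lam}$, which is harmless since $F_\Lam$ is $\bar{\Lam}$-invariant. Lemma~\ref{equicontinuous} then supplies a single modulus $\om$, independent of $\bar{\Lam}\in K$, with $\om_\Lam\le\om$ on $K'$ and $\om(\del)\to 0$ as $\del\to 0$. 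Substituting $\del=\sqrt{m}\,\av{\Lam}^{-1/m}$ completes the proof.

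The main obstacle is the uniformity of the Riemann-sum error across the compact family $\{\bar{\Lam}\in K\}$; it is handled precisely by the uniform equicontinuity of Lemma~\ref{equicontinuous} coupled with the existence of uniformly bounded fundamental domains on $K$. A minor additional subtlety occurs in the directed case, where the underlying function $f$ is only defined on $\bR^m_+$; this is resolved by choosing the fundamental domain for $\bar{\Lam}$ inside $\bR^m_+$ before invoking equicontinuity.
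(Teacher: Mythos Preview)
Your proof is correct and follows essentially the same approach as the paper's: reduce via~\eqref{eq:1120} and boundedness of $\zeta$ on $K$ to the difference of integrals against $\bar{\nu}_\Lam$ and $\lam$, interpret this as a Riemann-sum error over the tiling by scaled unit cubes $\bar{C}=[0,\av{\Lam}^{-1/m})^m$, and control that error uniformly in $\bar{\Lam}\in K$ using Lemma~\ref{equicontinuous}. Your treatment is in fact slightly more explicit than the paper's in two places --- the passage from the $\al$-th powers to the quantities themselves via uniform continuity of $t\mapsto t^{1/\al}$, and the need in the directed case to place the fundamental domain inside $\bR^m_+$ before invoking equicontinuity --- but the structure of the argument is identical.
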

\begin{proof}
Using~\eqref{eq:1120} we are reduced to showing 
\begin{equation}\label{eq:1131}
\av{\int_{\bR^m/\bar{\Lam}} \on{d}_{\bR^m/\bar{\Lam}}^\al(0,x) d\lam -\int_{\bR^m/\bar{\Lam}} \on{d}_{\bR^m/\bar{\Lam}}^\al(0,x) d \bar{\nu}_\Lam }\to 0.
\end{equation} 
Note that strictly speaking we should have raised the integrals in~\eqref{eq:1131} to a power of $1/\al$ but since $\zeta$ is a proper function the integral on the left
is bounded in terms of $K$ and so it is enough to establish~\eqref{eq:1131} as it is.

We start by finding a convenient fundamental domain for $\bar{\Lam}$ that will allow us to evaluate the left hand side
of~\eqref{eq:1131} and will take advantage of the assumption that $\bar{\Lam}\in K$.
Throughout we use the bar notation to denote scaling by $\av{\Lam}^{-1/m}$. The assumption that $\bar{\Lam}\in K$ means 
that there is a fixed compact set $K'\subset \bR^m_+$ which contains a fundamental domain for $\bar{\Lam}$. Given 
a fundamental domain $F'$ for $\Lam$ such that $\bar{F}'\subset K'$, we have that $A=F'\cap\bZ^m$ is a set of representatives
of $\bZ^m/\Lam$.  Denoting $C=\set{\sum_1^mt_i\mb{e}_i:0\le t_i <1},$ 
the standard fundamental domain of $\bZ^m$, it is straightforward to show that the disjoint union of cubes 
$F=\cup_{\mb{k}\in A}(\mb{k}+C)$ is again a fundamental domain of $\Lam$. 
It follows  
that $\bar{F}=\cup_{\mb{k}\in A}(\bar{\mb{k}} +\bar{C})$ is a fundamental domain for 
$\bar{\Lam}$ and since $\bar{A}\subset K'$, by slightly enlarging $K'$ if necessary, we may
assume that $\bar{F}\subset K'$. 

On identifying $\bR^m/\bar{\Lam}$ with $\bar{F}$ we see that the support of $\bar{\nu}_\Lam$ is exactly the set $\bar{A}$ and for $\mb{k}\in A$, the weight 
$\bar{\nu}_\Lam(\bar{\mb{k}})=\av{A}^{-1} = \lam(\bar{C})$. 
Thus, by expressing the integrals on the left hand side of~\eqref{eq:1131} as a sum over the cubes $\set{\bar{\mb{k}}+\bar{C}:\mb{k}\in A}$, using 
the triangle inequality, and inserting the absolute value into the integral, we see that the expression on the left hand side of~\eqref{eq:1131} is bounded above by
\begin{align}\label{eq:1219}
&\sum_{\mb{k}\in A}\int_{\bar{C}} \av{\on{d}_{\bR^m/\bar{\Lam}}^\al(0,\bar{\mb{k}}+ x) -\on{d}_{\bR^m/\bar{\Lam}}^\al(0,\bar{\mb{k}})}d\lam(x)\\
\nonumber 
 &\le  \sup_{x\in \bar{F}} \sup_{ y\in x+\bar{C}} \av{\on{d}_{\bR^m/\bar{\Lam}}^\al(0,x)-\on{d}_{\bR^m/\bar{\Lam}}^\al(0,y)}.
\end{align}
By Lemma~\ref{equicontinuous}, as $\bar{\Lam}$ varies, the family of functions 
$\on{d}_{\bR^m/\bar{\Lam}}^\al(0,x)$ is equicontinuous on $K'$ and therefore as the diameter of $\bar{C}$ goes to  $0$
the right hand side of~\eqref{eq:1219} goes to zero as well. As this diameter goes to $0$ as $\av{\Lam}\to\infty$ the claim follows.
\end{proof}
We now collect the above preparations and isolate an important component in the proof of Theorem~\ref{main theorem 1}. 
\begin{corollary}\label{cor:1109}
Let $\cA_i$ be finite collections of subgroups of $\bZ^m$ of index $\ell_i$. Let $\mu_i$ denote the normalized counting
measure on $\bar{\cA}_i=\set{\bar{\Lam}\in X_m:\Lam\in\cA_i}$
and assume $\mu_i\wstar m_{X_m}$.
Choose $\xi$ as in~\eqref{cdiam}, \eqref{cavgdist}, or \eqref{cgirth} and let $\xi_i:\bar{\cA}_i\to(0,\infty)$ be defined as $\xi_i(\bar{\Lam})= \ell_i^{-\frac{1}{m}} \xi(\cG)$,
where $\cG=C_{\bZ^m/\Lam}(I)$ or $C_{\bZ^m/\Lam}^+(I)$ according to the case. Then, if $\zeta:X_m\to(0,\infty)$ is as in~\eqref{covrad}, \eqref{avgdisttorus}, or \eqref{girthtorus} according to the choice of $\xi$,  
then  $(\xi_i)_*\mu_i\wstar \zeta_*m_{X_m}$.
\end{corollary}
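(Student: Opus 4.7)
The plan is to apply Lemma~\ref{lem:pushing convergence} with $X=X_m$, the sequence $\mu_i\wstar m_{X_m}$, the given functions $\xi_i$ on $\supp\mu_i=\bar{\cA}_i$, and the function $\zeta$ from the appropriate case~\eqref{covrad}, \eqref{avgdisttorus}, or \eqref{girthtorus}. Two ingredients need to be verified: (a) $\zeta:X_m\to(0,\infty)$ is continuous and proper, and (b) for every compact $K\subset X_m$,
\[
\sup_{\bar{\Lam}\in K\cap\bar{\cA}_i}\bigav{\zeta(\bar{\Lam})-\xi_i(\bar{\Lam})}\lra 0\quad\text{as }i\to\infty.
\]

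For ingredient (a), continuity is standard: for the shortest vector and covering radius it follows from the continuous dependence of $\Lam$ on its defining lattice points in the Chabauty topology on $X_m$; for the average distance it follows by combining Lemma~\ref{equicontinuous} applied to $f(x)=\on{d}_{\bR^m}^\al(0,x)$ with standard dominated-convergence considerations. Properness is essentially Mahler's compactness criterion: along a sequence in $X_m$ leaving every compact set the systole tends to $0$, which at once sends $\zeta$ to $0$ in case~\eqref{girthtorus}, forces $\zeta$ to $\infty$ in case~\eqref{covrad} by a volume argument, and forces $\zeta$ to $\infty$ in case~\eqref{avgdisttorus} by noting that the average distance dominates a constant multiple of the covering radius raised to the power~$\al$.

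For ingredient (b), cases~\eqref{cdiam} and~\eqref{cgirth} follow at once from the estimate~\eqref{eq:1216} of Lemma~\ref{lem:quazi isometric}, which gives $\bigav{\ell_i^{-1/m}\xi(\cG)-\zeta(\bar{\Lam})}=O(\ell_i^{-1/m})$ uniformly over all finite-index sublattices. The assumption $\mu_i\wstar m_{X_m}$ with $m_{X_m}$ non-atomic, combined with the observation that for each fixed $\ell$ the set $\set{\bar{\Lam}\in X_m:\Lam<\bZ^m,\,[\bZ^m:\Lam]=\ell}$ is finite, forces $\ell_i\to\infty$, so the bound tends to $0$ uniformly on all of $X_m$. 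Case~\eqref{cavgdist} is exactly Lemma~\ref{lem:moments} applied to the compact set $K$, and no further work is needed.

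With (a) and (b) in hand, Lemma~\ref{lem:pushing convergence} delivers $(\xi_i)_*\mu_i\wstar \zeta_*m_{X_m}$. The main obstacle, to the extent there is one, is the verification of properness of $\zeta$ in case~\eqref{avgdisttorus}; once one records the pointwise lower bound of the average distance by a positive multiple of the covering radius (which itself diverges on escaping sequences in $X_m$), the argument becomes routine.
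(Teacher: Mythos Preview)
Your proof is correct and follows exactly the paper's approach: apply Lemma~\ref{lem:pushing convergence}, verify continuity and properness of $\zeta$ via Mahler's criterion, and verify the uniform approximation condition via Lemma~\ref{lem:quazi isometric} in cases~\eqref{cdiam},~\eqref{cgirth} and Lemma~\ref{lem:moments} in case~\eqref{cavgdist}. You actually supply one detail the paper omits: the observation that $\ell_i\to\infty$ (needed both for the $O(\ell_i^{-1/m})$ bound to vanish and for Lemma~\ref{lem:moments} to apply), which you deduce correctly from the weak-star convergence to the non-atomic measure $m_{X_m}$ together with the finiteness of $\set{\bar{\Lam}:[\bZ^m:\Lam]=\ell}$ for each fixed $\ell$.
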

\begin{proof}
The proof in each case is an application of Lemma~\ref{lem:pushing convergence}. 
%
%
%
%
Our task is therefore reduced to verifying 
the conditions in this lemma in each case.
The first condition of Lemma~\ref{lem:pushing convergence} is properness of $\zeta$. 
It is straightforward to deduce from Mahler's compactness criterion that if $\zeta$ is as in~\eqref{covrad}-\eqref{girthtorus}, then 
$\zeta$ is continuous and proper\footnote{
Note that the properness refers to the range $(0,\infty)$ and in fact in case \eqref{girthtorus} $\zeta(\Lam)$ can approach $0$
as $\Lam$ develops short vectors.}
on $X_m$. 
The second condition of Lemma~\ref{lem:pushing convergence} holds by Lemma~\ref{lem:quazi isometric} in cases~\eqref{cdiam}, \eqref{cgirth} 
(with no reference to a compact set in $X_m$),
and by Lemma~\ref{lem:moments} in case~\eqref{cavgdist}.
\end{proof}

\section{Random Cayley graphs are approximate torus graphs}\label{sec:main prop}
In this section we prove Theorem~\ref{main theorem 1}.
Let $\Sig<\bZ^n$ be a finite index subgroup and consider the set 
\begin{equation}\label{eq:2127}
\cA_\Sig = \set{\Lam<\bZ^m : \bZ^m/\Lam \simeq \bZ^n/\Sig}.
\end{equation}
There is a natural map $\tau : \cS_{\bZ^n/\Sig}(m)\to \cA_\Sig$ which is defined as follows:
Given $s\in\cS_{\bZ^n/\Sig}(m)$, let $\mb{u}\in\Mat_{n\times m}(\bZ)$
be a matrix whose columns represent the elements of $s$ and define $\vphi_{\mb{u}}:\bZ^m\to \bZ^n/\Sig$ to be the 
homomorphism induced by $\mb{u}$; that is, for $k\in \bZ^m,$ $\vphi_{\mb{u}}(k) = \mb{u}k +\Sig$. We define
\begin{align*}
\tau(s) = \Lam_s&\defi \ker \vphi_{\mb{u}} = \set{k\in \bZ^m: \mb{u}k\in\Sig}.
\end{align*}
It is clear that  $\Lam_s$ does not depend on the choice of $\mb{u}$ but only on the set $s$. 
Since $s$ is a generating
set for $\bZ^n/\Sig$, $\vphi_{\mb{u}}$ is onto and therefore descends to an isomorphism 
$\overline{\vphi}_{\mb{u}}:\bZ^m/\Lam_s\to \bZ^n/\Sig$. Hence, $\Lam_s\in \cA_\Sig$ as stated above. Moreover, since 
$\overline{\vphi}_{\mb{u}}$ takes the generating set $I$ of $\bZ^m/\Lam_s$ to the generating set $s$ 
of $\bZ^n/\Sig$ we 
conclude the following lemma which explains the importance of the lattices $\Lam_s$ to our discussion
as they help us transport the discussion from random generating sets to random approximated torus graphs. 
\begin{lemma}\label{lem:1928}
Let $\Sig$ and $s$ be as above.
\begin{enumerate}
\item\label{prop:11080} $\Lam_s$ is a
subgroup of index $\av{\Sig}$ of $\bZ^m$ and moreover there exists an isomorphism $\bZ^m/\Lam_s\to\bZ^n/\Sig$ which 
takes the generating set $I$ to $s$.
\item\label{prop:11084} The graph $C_{\bZ^n/\Sig}(s)$ is isomorphic to the approximated torus graph $C_{\bZ^m/\Lam_s}(I)$.
\item\label{prop:11085} The digraph $C_{\bZ^n/\Sig}^+(s)$ is isomorphic to the approximated torus digraph $C_{\bZ^m/\Lam_s}^+(I)$.
\end{enumerate}
\end{lemma}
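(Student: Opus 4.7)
The plan is to deduce all three assertions directly from the construction of $\Lam_s$ via the first isomorphism theorem together with the general fact that Cayley (di)graphs are functorial in the pair (group, generating set).

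First, I would observe that $\vphi_{\mb{u}}:\bZ^m\to\bZ^n/\Sig$ defined by $\vphi_{\mb{u}}(k)=\mb{u}k+\Sig$ is a group homomorphism by construction, and it is surjective precisely because $s$ generates $\bZ^n/\Sig$ (the image of $\vphi_{\mb{u}}$ is the subgroup generated by the columns of $\mb{u}$, i.e.\ by $s$). By definition $\Lam_s=\ker\vphi_{\mb{u}}$, so the first isomorphism theorem gives an isomorphism $\overline{\vphi}_{\mb{u}}:\bZ^m/\Lam_s\xrightarrow{\sim}\bZ^n/\Sig$. This immediately shows $[\bZ^m:\Lam_s]=\av{\Sig}$, settling the index claim. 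Moreover $\vphi_{\mb{u}}(\mb{e}_i)$ equals the $i$-th column of $\mb{u}$ reduced mod $\Sig$, which is the $i$-th element of $s$; hence $\overline{\vphi}_{\mb{u}}$ sends the standard generating set $I$ bijectively onto $s$. This proves part~\eqref{prop:11080}.

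Next, for parts~\eqref{prop:11084} and \eqref{prop:11085}, I would invoke the general principle that if $\psi:G_1\to G_2$ is a group isomorphism and $t_1\subset G_1$, $t_2\subset G_2$ are generating sets with $\psi(t_1)=t_2$, then $\psi$ induces a (di)graph isomorphism $C_{G_1}(t_1)\to C_{G_2}(t_2)$: vertices go to vertices via $\psi$, and a (directed) edge in $C_{G_1}(t_1)$ is by definition a pair $(x,x+g)$ with $g\in t_1$ (or $g\in t_1\cup(-t_1)$ in the undirected case), which maps under $\psi$ to $(\psi(x),\psi(x)+\psi(g))$ with $\psi(g)\in t_2$ (resp.\ $\psi(g)\in t_2\cup(-t_2)$), i.e.\ an edge in $C_{G_2}(t_2)$. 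The inverse isomorphism $\psi^{-1}$ supplies the inverse map on edges, so this is indeed a (di)graph isomorphism. Applying this with $\psi=\overline{\vphi}_{\mb{u}}$, $t_1=I$, $t_2=s$ yields parts~\eqref{prop:11084} and \eqref{prop:11085} simultaneously (the two cases correspond to whether one uses the symmetric or directed convention for the edge set).

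I do not foresee a genuine obstacle here: the only subtle point is checking that $\Lam_s$ depends only on $s$ and not on the chosen ordering of its elements into a matrix $\mb{u}$, but this follows because reordering the columns of $\mb{u}$ corresponds to precomposing $\vphi_{\mb{u}}$ with a coordinate permutation of $\bZ^m$, which does not change the kernel. With that observation recorded, the lemma is essentially a bookkeeping consequence of the first isomorphism theorem plus the functoriality of the Cayley graph construction.
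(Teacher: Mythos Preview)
Your main argument is correct and coincides with the paper's: the paper proves the lemma in the paragraph immediately preceding it, observing that $\vphi_{\mb{u}}$ is onto because $s$ generates $\bZ^n/\Sig$, invoking the first isomorphism theorem to obtain $\overline{\vphi}_{\mb{u}}:\bZ^m/\Lam_s\to\bZ^n/\Sig$, and noting that this isomorphism carries $I$ to $s$, from which the Cayley (di)graph isomorphisms follow.

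One correction to your closing parenthetical, however: precomposing $\vphi_{\mb{u}}$ with a coordinate permutation $\sigma$ of $\bZ^m$ \emph{does} change the kernel, namely to $\sigma^{-1}(\Lam_s)$, so your stated justification for well-definedness is wrong. What is genuinely independent of choices is the map $\vphi_{\mb{u}}$ itself under change of \emph{representatives} in $\bZ^n$ of the elements of $s$ (adding elements of $\Sig$ to columns of $\mb{u}$ leaves $\vphi_{\mb{u}}$ unchanged); reordering the columns replaces $\Lam_s$ by a lattice in the same coordinate-permutation orbit. This does not affect assertions \eqref{prop:11080}--\eqref{prop:11085}, since permuting coordinates of $\bZ^m$ fixes the set $I$ and gives isomorphic approximate torus (di)graphs, so the lemma holds for any choice of $\mb{u}$ and your main proof is unaffected.
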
 
The object that will help us analyze the map $\tau: \cS_{\bZ^n/\Sig}(m)\to \cA_\Sig$ is $\Hom;$ the collection of 
homomorphisms from $\bZ^m$ onto $\bZ^n/\Sig$. We have the following natural diagram:
$$\xymatrix{ &\Hom\ar[dr]^{\pi_2} \ar[dl]_{\pi_1} &\\ \gen\ar[rr]_\tau & & \cA_\Sig.}$$
Here, $\pi_i$ are defined as follows: Given $\vphi\in\Hom$, $\pi_1(\vphi) = \vphi(I)$ (where $I$ is the standard basis of $\bZ^m$), and $\pi_2(\vphi) = \ker \vphi$. The map $\tau$ defined above takes $s\in\gen$ and chooses $\vphi$ in the 
$\pi_1$-fiber and then applies $\pi_2$. That is, $\tau$ closes the above diagram in a commutative manner.  On 
$\gen$ there is a natural action of $\on{Aut}(\bZ^n/\Sig)$ and on $\cA_\Sig$ there is a natural action of $\GL_m(\bZ)$. 
The main reason
we introduce $\Hom$ is that on it we have a natural action of the product $\GL_m(\bZ)\times \on{Aut}(\bZ^n/\Sig)$ such that 
each of $\pi_1$ and $\pi_2$ intertwines the action of the corresponding group. Namely, given $\vphi\in\Hom$ and 
$(\ga,\del)\in \GL_m(\bZ)\times \on{Aut}(\bZ^n/\Sig)$ we define $(\ga,\del)\vphi = \del \vphi \ga^{-1}$.

\begin{definition} 
We denote by $\nu_\Sig,\mu_\Sig, \bar{\mu}_\Sig$ the normalized counting measures
on $\cS_{\bZ^n/\Sig}(m),$ $\cA_\Sig,$ $\bar{\cA}_\Sig$ respectively. 
\end{definition}
\begin{lemma}\label{lem:2046}
\begin{enumerate}
\item\label{p151} $\GL_m(\bZ)\times \on{Aut}(\bZ^n/\Sig)$ acts transitively on $\Hom$.
\item\label{p152} $\GL_m(\bZ)$ acts transitively on $\cA_\Sig$.
\item\label{p153} 
The map $\tau:\cS_{\bZ^n/\Sig}(m)\to \cA_\Sig$ has fibers of constant size and so $\tau_*\nu_\Sig = \mu_\Sig$.
\item\label{p154} As $\Sig\to\infty$ according to Definition~\ref{def:divergence}, $\av{\cA_\Sig}\to\infty$.
\item\label{p155} As $\Sig\to\infty$ according to Definition~\ref{def:divergence}, $\bar{\mu}_\Sig\wstar m_{X_m}$.
\end{enumerate}
\end{lemma}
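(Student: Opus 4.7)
The plan is to address the five items in sequence, treating (1)--(4) structurally and reserving the bulk of the work for (5), which is the genuine equidistribution input. The lynchpin for both (1) and (2) is the Smith normal form classification of sublattices of $\bZ^m$: two finite-index sublattices $\Lam_1,\Lam_2<\bZ^m$ have isomorphic quotients iff they share the same invariant factors iff they lie in the same $\GL_m(\bZ)$-orbit. This immediately yields (2). For (1), given $\varphi_1,\varphi_2\in\Hom$, apply (2) to produce $\ga\in\GL_m(\bZ)$ with $\ga\ker\varphi_1=\ker\varphi_2$; then $\varphi_1\ga^{-1}$ and $\varphi_2$ share a kernel and so descend to isomorphisms $\bZ^m/\ker\varphi_2\to\bZ^n/\Sig$ which must differ by a unique $\del\in\on{Aut}(\bZ^n/\Sig)$.

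For (3), I would exploit the commutative diagram $\gen\xleftarrow{\pi_1}\Hom\xrightarrow{\pi_2}\cA_\Sig$. By (1), $\GL_m(\bZ)\times\on{Aut}(\bZ^n/\Sig)$ acts transitively on $\Hom$. The fiber $\pi_2^{-1}(\Lam)$ has size $|\on{Aut}(\bZ^n/\Sig)|$ (one epimorphism per identification $\bZ^m/\Lam\cong\bZ^n/\Sig$), while the fiber $\pi_1^{-1}(s)$ has size $m!$ corresponding to orderings of $s$ (all distinct because $|s|=m$). Double-counting along the diagram then yields $|\tau^{-1}(\Lam)| = |\on{Aut}(\bZ^n/\Sig)|/m!$, independent of $\Lam$ (with a minor technical caveat for exceptional $\Lam$ where the classes $\bar e_i$ fail to be distinct mod $\Lam$, a situation that cannot arise in the image of $\tau$). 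This gives $\tau_*\nu_\Sig=\mu_\Sig$. For (4), apply (2) with orbit-stabilizer to obtain $|\cA_\Sig|=[\GL_m(\bZ):\on{Stab}_{\GL_m(\bZ)}(\Lam_0)]$ for any $\Lam_0\in\cA_\Sig$; this index diverges under Definition~\ref{def:divergence}, and the case $m=n$, $\Sig=k\bZ^n$ is excluded precisely because then $\Lam_0=k\bZ^m$ is $\GL_m(\bZ)$-invariant and the index collapses to $1$.

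The main obstacle is (5). By (2) and the unimodular scaling, $\bar\cA_\Sig$ is a single $\GL_m(\bZ)$-orbit in $X_m$ of Hecke type, and $\bar\mu_\Sig$ is the uniform measure on it. The equidistribution $\bar\mu_\Sig\wstar m_{X_m}$ as $\Sig\to\infty$ is then an instance of the equidistribution theorems for such orbits established in \cite{GM03,COU01,EO06}. What remains is to verify that the hypotheses of that equidistribution theorem are satisfied under Definition~\ref{def:divergence}: in the case $m>n$ this should follow from $|\Sig|\to\infty$ alone, while in the case $m=n$ the exclusion of $\Sig=k\bZ^n$ is precisely what prevents degeneration of the scaled orbit to a single point and makes the cited theorem applicable. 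I would invoke those equidistribution results as a black box once this verification is carried out.
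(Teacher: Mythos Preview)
Your outline matches the paper's proof closely. For (1)--(2) you invoke Smith normal form to get (2) first and then lift to (1); the paper does essentially the same in reverse order, proving (1) by observing that an isomorphism of quotients of $\bZ^m$ lifts to an element of $\GL_m(\bZ)$, and then deducing (2) from (1) via $\pi_2$. For (3) you compute $\av{\pi_2^{-1}(\Lam)}=\av{\on{Aut}(\bZ^n/\Sig)}$ directly, while the paper argues constancy of the $\pi_2$-fibers more abstractly from the transitivity in (1)--(2); both routes work. For (5), both you and the paper reduce to (4) and invoke the Hecke equidistribution theorems of \cite{GM03,COU01,EO06} as a black box.

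The gap is in (4). You assert that $[\GL_m(\bZ):\on{Stab}_{\GL_m(\bZ)}(\Lam_0)]$ diverges under Definition~\ref{def:divergence} but give no argument beyond the degenerate example $m=n$, $\Sig=k\bZ^n$ where the orbit collapses to a point. The positive direction---showing the orbit size actually tends to infinity under the stated hypothesis---is the entire content of (4), and you have not supplied it. The paper's argument runs through elementary divisors: if $\ell_1\mid\cdots\mid\ell_m$ are the invariant factors of $\Lam\in\cA_\Sig$ in $\bZ^m$, then by \cite[Theorem~5.3]{EMSS} one has $\av{\cA_\Sig}\to\infty$ iff $\ell_m/\ell_1\to\infty$, which by the divisibility chain is equivalent to $\av{\Sig}/\ell_1^m\to\infty$. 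Since $\bZ^m/\Lam\simeq\bZ^n/\Sig$, the $\ell_i$ are the invariant factors of $\Sig$ in $\bZ^n$ padded with $m-n$ leading $1$'s, so $\ell_1=1$ when $m>n$ and $\ell_1=\gcd\Sig$ when $m=n$, recovering exactly the two clauses of Definition~\ref{def:divergence}. Without this computation, your orbit-stabilizer reformulation is a restatement rather than a proof.
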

\begin{proof}
\eqref{p151}. Let $\vphi_1,\vphi_2\in \Hom$ and assume first that $\ker\vphi_1=\ker\vphi_2 = \Lam$, 
if we denote by $\ol{\vphi}_i:\bZ^m/\Lam\to \bZ^n/\Sig$ the corresponding isomorphisms that $\vphi_i$ descend to, then
post-composing $\vphi_2$ with $\del = \ol{\vphi}_1\ol{\vphi}_2^{-1}\in\on{Aut}(\bZ^n/\Sig)$ we get that $\del \vphi_2$ and $\vphi_1$ have the same kernel $\Lam$ and descend to the same map on $\bZ^m/\Lam$. We conclude that 
$\del\vphi_2 = \vphi_1$. Thus in order to complete the proof of \eqref{p151} we only need to show that given $\vphi_1,\vphi_2\in \Hom$
there exists $\ga\in \GL_m(\bZ)$ such that $\ker \vphi_1 = \ker \vphi_2\ga = \ga^{-1}\ker\vphi_2$. 
Indeed, if $\ker\vphi_i = \Lam_i$ then $\bZ^m/\Lam_i$ are isomorphic and since $\bZ^m$ is a free abelian group such an isomorphism must arise from an isomorphism $\bZ^m\to\bZ^m$ taking $\Lam_1$ to $\Lam_2$ which completes the proof.

\noindent\eqref{p152}. Since $\pi_2$ is onto and  intertwines the actions of $\GL_m(\bZ)$, and since 
for $\del\in \on{Aut}(\bZ^n/\Sig)$ and  $\vphi\in \Hom$ we have that $\ker \vphi = \ker \del\vphi$, the transitivity proved 
in \eqref{p151} implies the transitivity claimed in \eqref{p152}. 

\noindent\eqref{p153} The fibers of $\pi_1$ are of size $m!$ and each such fiber is contained in a single $\pi_2$-fiber, thus 
the claim will follow once we establish that $\pi_2$ has fibers of constant size. Indeed, the transitivity proved in~\eqref{p151} and \eqref{p152}, and the fact (which was used in the proof of~\eqref{p152}) saying 
that the $\on{Aut}(\bZ^n/\Sig)$-action preserves the $\pi_2$-fibers imply that given two $\pi_2$-fibers  
there exists an element of $\GL_m(\bZ)$ which maps one to the other. In particular, they are of the same size as claimed.

\noindent\eqref{p154} Recall that given a sublattice $\Lam<\bZ^m$ there exist unique positive integers $\ell_1,\dots,\ell_m$ referred to as \textit{the elementary divisors} of $\Lam$ in $\bZ^m$, which satisfy the division relation $\ell_i |\ell_{i+1}$ and for which there exists
a basis $\set{v_i}_1^m$ of $\bZ^m$ such that $\set{\ell_i v_i}_1^m$ form a basis for $\Lam$ (see for example~\cite{SD01}).
It is shown in the proof of~\cite[Theorem 5.3]{EMSS} that the cardinality of the orbit $\GL_m(\bZ)\Lam$, which equals $\av{\cA_\Sig}$
by~\eqref{p152}, goes to infinity
if and only if the ratio $\ell_m/\ell_1$ does so too.
Due to the division relation between the $\ell_i$'s, the latter is equivalent to the divergence of
$(\prod_1^m\ell_i)/\ell_1^m = \av{\Lam}/\ell_1^m = \av{\Sig}/\ell_1^m.$ We are therefore reduced to showing that as $\Sig\to\infty$
according to Definition~\ref{def:divergence}, one has $\av{\Sig}/\ell_1^m\to\infty$. 
Since $\bZ^n/\Sig\simeq \bZ^m/\Lam$ and $m\ge n$ we have that the elementary divisors of $\Lam$ in $\bZ^m$ are obtained from 
to those of $\Sig$ in $\bZ^n$ by augmenting them by $m-n$ 1's in the beginning. Thus, if $m>n$ then $\ell_1=1$ and so $\Sig\to\infty$ if and only if $\av{\Sig}/\ell_1^m\to\infty.$ In the case $n=m$, since $\ell_1$ is the first elementary divisor of $\Sig$ in $\bZ^n$, then we get that $\ell_1=\gcd \Sig$. Hence, again $\Sig\to\infty$ if and only if $\av{\Sig}/\ell_1^n\to\infty$, which concludes the proof
of \eqref{p154}.

\noindent \eqref{p155}. This follows from part~\eqref{p154} and the following theorem which is a special case of 
\cite[Theorem 1.2]{EO06} (see also \cite{COU01}, \cite{GM03}):
\begin{theorem}\label{thm:eqdist}
If $\mu_i$ is the normalized counting measure supported on a finite $\GL_m(\bZ)$-orbit in $X_m$ such that the cardinality $\av{\supp(\mu_i)}\to\infty$ as $i\to\infty$, then $\mu_i\wstar m_{X_m}$.
\end{theorem}

\end{proof}
We are now ready to prove the main result, Theorem~\ref{main theorem 1}.
\begin{proof}[Proof of Theorem~\ref{main theorem 1}]
Choose $\Sig_i\to\infty$ and let $\xi$ be as in~\eqref{cdiam}, \eqref{cavgdist}, or \eqref{cgirth}.
Consider the diagram
$$\xymatrix{
\cS_{\bZ^n/\Sig_i}(m) \ar[dr]_{\xi_i}\ar[rr]^{s\mapsto\bar{\tau}(s) =\bar{\Lam}_s} &&\bar{\cA}_{\Sig_i}\ar[dl]^{\xi_i'}\\
& \bR&
}$$
where $\xi_i(s)  = \av{\Sig_i}^{-\frac{1}{m}}\xi(\cG)$, where $\cG=C_{\bZ^n/\Sig_i}(s)$ or $C_{\bZ^n/\Sig_i}^+(s)$ according to the case, and similarly,  $\xi_i'(\bar{\Lam})  = \av{\Sig_i}^{-\frac{1}{m}}\xi(\cG)$, where $\cG = C_{\bZ^m/\Lam}(I)$ or $C_{\bZ^m/\Lam}^+(I)$ according to the case. By Lemma~\ref{lem:1928} the diagram is indeed commutative.
Our goal is to show that $\xi_i$ converge in distribution to the random variable $\zeta$ on $X_m$, where $\zeta$ is as in
\eqref{covrad}, \eqref{avgdisttorus}, or \eqref{girthtorus} according to the choice of $\xi$. 
This is the same as saying that $(\xi_i)_*\nu_{\Sig_i}\wstar \zeta_* m_{X_m}$.  Since  $\xi_i = \xi_i'\comp \bar{\tau}$ and
by Lemma~\ref{lem:2046}\eqref{p153} $\tau_*\nu_{\Sig_i} = \mu_{\Sig_i}$ (which is equivalent to saying that 
$\bar{\tau}_*\nu_{\Sig_i} = \bar{\mu}_{\Sig_i}$), we are reduced to showing that $(\xi_i')_*\bar{\mu}_{\Sig_i}\wstar \zeta_* m_{X_m}$.
This follows from Corollary~\ref{cor:1109} which is applicable since $\bar{\mu}_{\Sig_i}\wstar m_{X_m}$ by Lemma~\ref{lem:2046}\eqref{p155}. 
\end{proof}

\section{A refinements of Theorem~\ref{main theorem 1}}\label{refinements}
In this section we restrict attention to the case $m>n$ and consider only groups of the form $\bZ^n/\Sig$ where $\Sig=k\bZ^n$. Thus according to 
Definition~\ref{def:divergence}, $\Sig\to\infty$ if and only if $k\to\infty$.
We wish to define families of natural non-uniform probability measures on $\cS_{\bZ^n/\Sig}(m)$ with respect to which the conclusion of Theorem~\ref{main theorem
1} will remain valid.

Our next goal is to put all the sets $\cS_{\bZ^n/k\bZ^n}(m)$ in a fixed ambient space.
As a first step, our restriction to $\Sig=k\bZ^n$ allows us to embed all the groups $\bZ^n/\Sig$ in a single continuous torus. 
Namely, let $\bT^n=\bR^n/\bZ^n$ and let $\iota_k: \bZ^n/k\bZ^n\hookrightarrow \bT^n$ be defined by $\iota_k(u+k\bZ^n) = \frac{1}{k}u+\bZ^n$. 
Next, the collection of subsets of size $m$ in $\bT^n$ may be thought of as contained in the quotient of the product $(\bT^n)^m$ by the permutation 
group $S_m$ on $\set{1,\dots,m}$. Let us denote this quotient by $Y= S_m\backslash (\bT^n)^m$. 
Thus, using $\iota_k$ we may and will identify $\cS_{\bZ^n/k\bZ^n}(m)$ with a subset of $Y$. 
We shall refer to the push-forward probability measure $\pi_*\lam^{\otimes m}$ on $Y$ as the  Lebesgue measure on $Y$, 
where $\pi:(\bT^n)^m\to Y$ is the quotient map and $\lam$ is the usual Lebesgue measure on the torus $\bT^n$. 

Given a subset  
$D\subset Y$, we define the probability
measure $\nu_{D,k}$ on $\cS_{\bZ^n/k\bZ^n}(m)$ as the restriction of the uniform probability measure on $\cS_{\bZ^n/k\bZ^n}(m)$ to $D$ 
(assuming that $\cS_{\bZ^n/k\bZ^n}(m)\cap D\ne\varnothing$). Finally, by a Jordan measurable set $D\subset Y$ we mean
a measurable set 
with boundary of Lebesgue measure zero. The importance of Jordan measurability to our discussion is that 
the characteristic function $\chi_D$ of such a set can be approximated from above and below (in $L^1$-norm) by continuous functions and thus $\chi_D$ behaves nicely when integrated against a sequence of measures which converge weak-star to the Lebesgue measure. 
\begin{theorem}\label{main theorem 2}
Fix $m>n$ and let $\Sig<\bZ^n$ be of the form $\Sig = k\bZ^n$.
Let $D\subset Y$ be a Jordan measurable set of positive Lebesgue measure and let $\nu_{D,k}$ be the restriction of the uniform 
measure on $\cS_{\bZ^n/\Sig}(m)$ to $D$. 
Then, for all $k$ large enough $\cS_{\bZ^n/k\bZ^n}(m)\cap D\ne\varnothing$ and 
the conclusion of Theorem~\ref{main theorem 1} remains valid (with the same limit distributions) if $s$ is chosen randomly according to $\nu_{D,k}$.
\end{theorem}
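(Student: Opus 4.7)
The plan is to reduce Theorem~\ref{main theorem 2} to Corollary~\ref{cor:1109} via a joint equidistribution statement in $Y \times X_m$. Specifically, it suffices to prove the following refinement of Lemma~\ref{lem:2046}\eqref{p155}: under the hypotheses of the theorem,
$$(\bar{\tau})_*\nu_{D,k} \;\wstar\; m_{X_m} \quad\text{as } k \to \infty,$$
where $\bar{\tau}(s) = \bar{\Lam}_s$. Once this is in hand, the argument of Theorem~\ref{main theorem 1} applies verbatim: the commutative diagram $\xi_k = \xi_k' \circ \bar{\tau}$ together with Corollary~\ref{cor:1109} applied to the counting measure on $\bar{\tau}(\cS_{\bZ^n/k\bZ^n}(m)\cap D)$ yields the claimed convergence in distribution.

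The heart of the matter is the following joint equidistribution. Define
$$\Psi_k : \cS_{\bZ^n/k\bZ^n}(m) \longrightarrow Y \times X_m, \qquad \Psi_k(s) = \bigl(\iota_k(s),\,\bar{\Lam}_s\bigr),$$
and let $\tilde{\nu}_k = (\Psi_k)_*\nu_k$. The goal is to establish $\tilde{\nu}_k \wstar \lam_Y \otimes m_{X_m}$, together with the (weaker and more classical) statement that the projection $(\pi_Y)_*\tilde{\nu}_k$ converges weak-star to $\lam_Y$ on the compact space $Y$. The latter combined with Portmanteau applied to the Jordan measurable set $D$ gives $\nu_k(\{\iota_k(s) \in D\}) \to \lam_Y(D) > 0$, which establishes that $\cS_{\bZ^n/k\bZ^n}(m)\cap D \ne \varnothing$ for all large $k$. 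From the joint equidistribution one deduces the conditional convergence as follows: for any $f \in C_c(X_m)$, sandwich $\chi_D$ by continuous functions $\chi_\pm \in C(Y)$ with $\chi_- \le \chi_D \le \chi_+$ and $\int(\chi_+ - \chi_-)\,d\lam_Y$ arbitrarily small; since $\chi_\pm \cdot f \in C_c(Y \times X_m)$ by compactness of $Y$, weak-star convergence of $\tilde{\nu}_k$ gives
$$\int \chi_D(\iota_k(s))\,f(\bar{\Lam}_s)\,d\nu_k(s) \;\longrightarrow\; \lam_Y(D)\int_{X_m} f\,dm_{X_m},$$
and dividing by $\nu_k(\{\iota_k(s)\in D\}) \to \lam_Y(D)$ yields $(\bar{\tau})_*\nu_{D,k} \wstar m_{X_m}$, as required.

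The main obstacle is the joint equidistribution itself: the set $\Psi_k(\cS_{\bZ^n/k\bZ^n}(m))$ is not an orbit of any obvious group action on $Y \times X_m$, and restricting to the preimage of $D$ destroys the $\GL_m(\bZ)$-orbit structure on the $X_m$ factor that powered the proof of Theorem~\ref{main theorem 1}. The natural approach is to lift $\Psi_k$ to the enlarged space $X_{n+m}$ of unimodular lattices in $\bR^{n+m}$: given representatives $\mb{u} \in \Mat_{n\times m}(\bZ)$ of $s$, one considers the scaled lattice $\bar{L}_s \in X_{n+m}$ associated to the matrix $g_{\mb{u},kI_n} = \smallmat{I_m & 0 \\ \mb{u} & kI_n}$, which simultaneously encodes both the reduction $\iota_k(s)$ and the kernel $\Lam_s = L_s \cap \bR^m$. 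One then verifies that $\{\bar{L}_s : s \in \cS_{\bZ^n/k\bZ^n}(m)\}$ is a union of orbits in $X_{n+m}$ amenable to the refined orbit-equidistribution of \cite{EMSS} (which generalizes \cite{M10}, the input used in \cite{MS11}), and pushes down via a natural projection $X_{n+m} \to Y \times X_m$ to obtain the sought joint equidistribution. The restrictions $m > n$ and $\Sig = k\bZ^n$ enter precisely in placing the lifted orbits within the scope of \cite{EMSS}: $m > n$ forces the first elementary divisor $\ell_1 = 1$ exactly as in the proof of Lemma~\ref{lem:2046}\eqref{p154} so that the orbits diverge nontrivially, while fixing $\Sig = k\bZ^n$ supplies the uniform orbit-type across $k$ needed to identify the limit with the product $\lam_Y \otimes m_{X_m}$.
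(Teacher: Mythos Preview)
Your proposal is correct and follows essentially the same route as the paper: reduce to showing $(\bar\tau)_*\nu_{D,k}\wstar m_{X_m}$, lift to $X_{n+m}$ via the block matrix $g_{\mb{u},kI_n}$, invoke the joint equidistribution theorem of \cite{EMSS} in the product $Y\times Z$ (with $Z=\{L\in X_{n+m}:L\cap\bR^m\in X_m\}$), project $Z\to X_m$ by $L\mapsto L\cap\bR^m$, and then use Jordan measurability of $D$ to condition on the $Y$-coordinate. The only cosmetic imprecision is your phrase ``natural projection $X_{n+m}\to Y\times X_m$'': no such map exists on all of $X_{n+m}$, and the paper instead states \cite[Theorem~1.3]{EMSS} directly as equidistribution of the pairs $(s,L_s')$ in $Y\times Z$.
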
 
\begin{proof}[Sketch of proof]
The proof is identical to that of Theorem~\ref{main theorem 1} where instead of using the collection 
$\cA_\Sig=\set{\Lam_s:s\in\cS_{\bZ^n/\Sig}(m)}$ one uses the sub-collection $ \set{\Lam_s: s\in\cS_{\bZ^n/\Sig}(m)\cap D}.$ 
The only ingredient 
missing is an equidistribution result substituting Lemma~\ref{lem:2046}\eqref{p155} saying that the normalized counting measure 
$\bar{\mu}_{D,k}$ on $\set{\bar{\Lam}_s: s\in\cS_{\bZ^n/k\bZ^n}(m)\cap D}$ satisfies $\bar{\mu}_{D,k}\wstar m_{X_m}$. 

In order to justify this equidistribution statement we need to introduce some terminology and notation. 
For $s\in \cS_{\bZ^n/\Sig}(m)$ let $\mb{u}\in\Mat_{n\times m}(\bZ)$ be a matrix whose columns represent the elements of $s$. 
Let $d=n+m$ and consider the matrix $g_{\mb{u}} = \smallmat{I_m & 0 \\ \mb{u} &kI_n}$. It is clear that the lattice $L_s = g_{\mb{u}}\bZ^d$
does not depend on the choice of $\mb{u}$ but only on $s$. We leave it as an exercise (see for 
example \cite[equation (2.12)]{EMSS}) to check that 
$L_s\cap \bR^m = \Lam_s$ (where $\bR^m$ denotes the subspace of the first $m$ coordinates in $\bR^d$).
Let us denote by
$L_s'\in X_d$  the unimodular lattice obtained by applying to the lattice $L_s$ in $\bR^d$ the diagonal matrix 
$a(k)=\smallmat{k^{-\frac{n}{m}}I_m&0\\0&I_n}$, so that by the above exercise $L_s'\cap\bR^m = \bar{\Lam}_s$.
The equidistribution $\bar{\mu}_{D,k}\wstar m_{X_m}$ 
follows from ~\cite[Theorem 1.3]{EMSS} which
is a joint equidistribution theorem which may be restated (see Remark~\ref{final remark}) as saying that the collection of pairs 
\begin{equation}\label{eq:emss}
\set{(s,L_s'):s\in\cS_{\bZ^n/k\bZ^n}(m)}
\end{equation}
equidistributes
in the product space $Y\times Z$.
Here, $Y$ is as above and  
$Z\subset X_d$ is the subspace defined by $Z= \set{L\in X_d:L\cap \bR^m\in X_m}$. The space $Z$ has a natural 
`uniform' probability measure on it and the equidistribution eluded to above means that the normalized counting measure
on the collection~\eqref{eq:emss} converges weak-star to the product of the Lebesgue measure on $Y$ and the uniform measure 
on $Z$. In turn, there is a natural projection $Z\to X_m$ defined by $L\mapsto L\cap \bR^m$ and since the uniform measure on
$Z$ projects to $m_{X_m}$ we conclude that the collection of pairs $\set{(s,\bar{\Lam}_s):s\in\cS_{\bZ^n/k\bZ^n}(m)}$
equidistributes in the product $Y\times X_m$. In particular,
the fact that the limit measure is a product measure implies that if we condition on the left coordinate being in $D$ the right coordinate still equidistributes in $X_m$. 
That is, $\set{\bar{\Lam}_s:s\in \cS_{\bZ^n/k\bZ^n}(m)\cap D}$ equidistributes in $X_m$ as desired.
The assumption that $D$ is Jordan measurable and of positive measure is exactly the condition needed in order that the equidistribution in the product $Y\times Z$ may
be conditioned on the fact that the left coordinate is in $D$. 
\end{proof}
\begin{remark}\label{final remark}
Strictly speaking~\cite[Theorem 1.3]{EMSS} does not deal with the product space $Y\times Z$ but with $(\bT^n)^m\times Z$.  
Instead of considering subsets $s\in\cS_{\bZ^n/k\bZ^n}(m)$ they consider $m$-tuples $\mb{u}=\set{u_1,\dots ,u_m}\in (\bZ^n/k\bZ^n)^m$ (which we think of as contained in $(\bT^n)^m$ using $\iota_k$), where the $u_i$'s are assumed to generate $\bZ^n/k\bZ^n$. 
This  difference entails a minor issue which is that $m$-tuples corresponds to subsets of size $\le m$ rather than equal to $m$ and furthermore, this correspondence is not 1-1 and the size of the fibers is not constant. Since the percentage of $m$-tuples which correspond to sets of size $<m$ is negligible these two issues may be ignored and it is 
straightforward to deduce the version of the theorem stated above from the formulation in~\cite{EMSS}. We leave the details to the interested reader.
\end{remark}
\def\cprime{$'$} \def\cprime{$'$} \def\cprime{$'$}
\begin{bibdiv}
\begin{biblist}

\bib{AGG}{article}{
      author={Amir, Gideon},
      author={Gurel-Gurevich, Ori},
       title={The diameter of a random {C}ayley graph of {$\Bbb Z_q$}},
        date={2010},
        ISSN={1867-1144},
     journal={Groups Complex. Cryptol.},
      volume={2},
      number={1},
       pages={59\ndash 65},
         url={http://dx.doi.org/10.1515/GCC.2010.004},
      review={\MR{2672553}},
}

\bib{COU01}{article}{
      author={Clozel, Laurent},
      author={Oh, Hee},
      author={Ullmo, Emmanuel},
       title={Hecke operators and equidistribution of {H}ecke points},
        date={2001},
        ISSN={0020-9910},
     journal={Invent. Math.},
      volume={144},
      number={2},
       pages={327\ndash 351},
         url={http://dx.doi.org/10.1007/s002220100126},
      review={\MR{1827734}},
}

\bib{EMSS}{article}{
      author={Einsiedler, Manfred},
      author={Mozes, Shahar},
      author={Shah, Nimish},
      author={Shapira, Uri},
       title={Equidistribution of primitive rational points on expanding
  horospheres},
        date={2016},
        ISSN={0010-437X},
     journal={Compos. Math.},
      volume={152},
      number={4},
       pages={667\ndash 692},
         url={http://dx.doi.org/10.1112/S0010437X15007605},
      review={\MR{3484111}},
}

\bib{EO06}{article}{
      author={Eskin, Alex},
      author={Oh, Hee},
       title={Ergodic theoretic proof of equidistribution of {H}ecke points},
        date={2006},
        ISSN={0143-3857},
     journal={Ergodic Theory Dynam. Systems},
      volume={26},
      number={1},
       pages={163\ndash 167},
         url={http://dx.doi.org/10.1017/S0143385705000428},
      review={\MR{2201942}},
}

\bib{GM03}{article}{
      author={Goldstein, Daniel},
      author={Mayer, Andrew},
       title={On the equidistribution of {H}ecke points},
        date={2003},
        ISSN={0933-7741},
     journal={Forum Math.},
      volume={15},
      number={2},
       pages={165\ndash 189},
         url={http://dx.doi.org/10.1515/form.2003.009},
      review={\MR{1956962}},
}

\bib{M10}{article}{
      author={Marklof, Jens},
       title={The asymptotic distribution of {F}robenius numbers},
        date={2010},
        ISSN={0020-9910},
     journal={Invent. Math.},
      volume={181},
      number={1},
       pages={179\ndash 207},
         url={http://dx.doi.org/10.1007/s00222-010-0245-z},
      review={\MR{2651383}},
}

\bib{MS11}{article}{
      author={Marklof, Jens},
      author={Str{\"o}mbergsson, Andreas},
       title={Diameters of random circulant graphs},
        date={2013},
        ISSN={0209-9683},
     journal={Combinatorica},
      volume={33},
      number={4},
       pages={429\ndash 466},
         url={http://dx.doi.org/10.1007/s00493-013-2820-6},
      review={\MR{3133777}},
}

\bib{SD01}{book}{
      author={Swinnerton-Dyer, H. P.~F.},
       title={A brief guide to algebraic number theory},
      series={London Mathematical Society Student Texts},
   publisher={Cambridge University Press, Cambridge},
        date={2001},
      volume={50},
        ISBN={0-521-00423-3},
         url={http://dx.doi.org/10.1017/CBO9781139173360},
      review={\MR{1826558}},
}

\end{biblist}
\end{bibdiv}

\end{document}